\documentclass{amsart}

\usepackage[T1]{fontenc}
\usepackage{lmodern}
\usepackage{microtype}

\usepackage{amsmath,amssymb,mathtools}
\usepackage{mathrsfs}

\usepackage{enumitem}

\usepackage{xcolor}
\usepackage[colorlinks=true,citecolor=teal,linkcolor=blue,urlcolor=blue]{hyperref}

\newif\ifdraft
\draftfalse

\ifdraft
  \usepackage{showkeys}
\fi

\makeatletter
\def\@maketitle{\normalfont\normalsize
  \let\@makefnmark\relax  \let\@thefnmark\relax
  \ifx\@empty\@date\else \@footnotetext{\@setdate}\fi
  \ifx\@empty\@subjclass\else \@footnotetext{\@setsubjclass}\fi
  \ifx\@empty\@keywords\else \@footnotetext{\@setkeywords}\fi
  \ifx\@empty\thankses\else \@footnotetext{\def\par{\let\par\@par}\@setthanks}\fi
  \global\topskip42\p@
  \@settitle
  \ifx\@empty\authors \else \@setauthors \fi
  \@setabstract
  \normalsize
  \dimen@34\p@ \advance\dimen@-\baselineskip
  \vskip\dimen@\relax
}
\def\@citestyle{\m@th\upshape\mdseries\color{teal}}
\def\@biblabel#1{\textcolor{teal}{\textbf{[#1]}}}
\makeatother

\numberwithin{equation}{section}

\theoremstyle{plain}

\newtheorem{theorem}{Theorem}[section]
\newtheorem{lemma}[theorem]{Lemma}
\newtheorem{proposition}[theorem]{Proposition}
\newtheorem{corollary}[theorem]{Corollary}

\theoremstyle{definition}

\newtheorem{definition}[theorem]{Definition}

\theoremstyle{remark}

\newtheorem{remark}[theorem]{Remark}

\everymath{\displaystyle}

\newcommand{\R}{\mathbb{R}}
\newcommand{\cR}{\mathcal{R}}
\newcommand{\cS}{\mathcal{S}}
\newcommand{\cT}{\mathcal{T}}

\newcommand{\Ric}{\operatorname{Ric}}
\newcommand{\Rm}{\operatorname{Rm}}
\newcommand{\divg}{\operatorname{div}}
\newcommand{\trg}{\operatorname{tr}}
\newcommand{\Id}{\operatorname{Id}}

\newcommand{\inner}[2]{\left\langle #1,#2\right\rangle}

\newcommand{\papertitle}{On Ricci solitons whose level hypersurfaces have parallel second fundamental form}
\newcommand{\papershorttitle}{Level hypersurfaces with parallel second fundamental form}

\title[\papershorttitle]{\papertitle}

\author[M. A. R. M. Hor\'acio]{Matheus Andrade Ribeiro de Moura Hor\'acio}

\address{
Departamento de Matem\'atica,
Universidade de Bras\'ilia,
70910-900, Bras\'ilia, DF,
Brazil
}

\email{matheus.andrade5488@gmail.com}

\date{}

\subjclass[2020]{Primary 53C25; Secondary 53C20, 53E20}

\keywords{Gradient Ricci solitons, rigidity, constant scalar curvature, multiply warped products, isoparametric functions}

\hypersetup{
  pdftitle={\papertitle},
  pdfauthor={Matheus Andrade Ribeiro de Moura Horacio},
  pdfsubject={Differential geometry},
  pdfkeywords={Gradient Ricci solitons, rigidity, constant scalar curvature, multiply warped products}
}

\begin{document}

\begin{abstract}
We study gradient Ricci solitons whose potential functions have level hypersurfaces with parallel second fundamental form. We show that such a soliton is locally a multiply warped product of a one-dimensional base and Einstein fibers, with potential function depending only on the base. Thus, locally, this condition characterizes the multiply warped structure that occurs in many classical constructions of Ricci solitons. We further prove that the same local structure follows for any gradient Ricci soliton if just one regular level hypersurface has parallel second fundamental form and parallel intrinsic Ricci tensor. As an application, we prove that a complete nonsteady gradient Ricci soliton with constant scalar curvature is rigid as soon as one regular level hypersurface has this property. In particular, the conjecture of Cao holds for gradient shrinking Ricci solitons with this property. We also prove that a complete gradient shrinking Ricci soliton whose regular level hypersurfaces have parallel second fundamental form and at most one nonzero principal curvature is rigid, without any assumption on the scalar curvature.
\end{abstract}

\maketitle

\section{Introduction and main results}\label{sec_intro}

A gradient Ricci soliton $(M,g,f,\lambda)$ is a Riemannian manifold $(M^n,g)$ together with a smooth function $f$ and a constant $\lambda\in\R$ satisfying
\begin{align}\label{fundeq}
	\Ric+\nabla^2f=\lambda g.
\end{align}
It is shrinking, steady or expanding according as $\lambda>0$, $\lambda=0$ or $\lambda<0$, and nonsteady if $\lambda\neq0$. Following Petersen and Wylie \cite{pewy}, it is called rigid if it is isometric to a quotient $N\times_\Gamma\R^k$ of a product of an Einstein manifold $N$, with Einstein constant $\lambda$, and a Euclidean space, where $f=\frac{\lambda}{2}|x|^2$ on the Euclidean factor and $\Gamma$ acts freely on $N$ and by orthogonal transformations on $\R^k$. By \cite[Theorem 1.2]{pewy}, a nonsteady gradient Ricci soliton is rigid if and only if it has constant scalar curvature and is radially flat, that is, $\Rm(\nabla f,X,\nabla f,X)=0$ for every $X$. Rigidity is also known under several other curvature conditions, see for instance \cite{flgr11,ms13,pw10}.

Cao conjectured that every complete gradient shrinking Ricci soliton with constant scalar curvature $R$ is rigid (see \cite{chengzhou}). The conjecture was proved in dimension four by Cheng and Zhou \cite{chengzhou}; see also \cite{ouquwu}. Wang and Wu \cite{wwres} proved that a complete gradient shrinking Ricci soliton with $R=(n-2)\lambda$ whose level sets of $f$ have vanishing Weyl tensor is a finite quotient of $\R^2\times\mathbb{S}^{n-2}$. It is worth mentioning that, when the scalar curvature is constant, the potential function is isoparametric, and Fern\'andez-L\'opez and Garc\'ia-R\'io used this fact in \cite{fr16} to show that $R\in\{0,\lambda,\ldots,n\lambda\}$. They also proved that such a soliton is rigid if and only if its Ricci operator has constant rank, or if and only if $\trg\Ric^3$ and $\trg\Ric^4$ are constant, and that it is rigid when its Ricci tensor has at most three distinct eigenvalues (see \cite[Theorem 2, Remark 17 and Theorem 9]{fr16}).

Many of the known gradient Ricci solitons are multiply warped products of a one-dimensional base and Einstein fibers, with $f$ depending only on the base. This is the case of the steady solitons of Bryant \cite{bryant} and Ivey \cite{ivey1}, of the steady and expanding solitons of Dancer and Wang \cite{danwan,danwan2}, which have an arbitrary number of Einstein fibers, and of the shrinking solitons in \cite{angenent}. Their potential functions are isoparametric, and complete gradient Ricci solitons with isoparametric potential functions were recently investigated by Tran and Yamazaki \cite{tranyamazaki}. These constructions start from a multiply warped ansatz, and it is natural to ask whether a counterexample to the conjecture of Cao can be found among such metrics. A converse question is which geometric conditions on the level hypersurfaces of $f$ force this ansatz.

In this article, we investigate gradient Ricci solitons whose level hypersurfaces of the potential function have parallel second fundamental form. As we will see, this condition characterizes, around regular points of $f$, the multiply warped products described above, and the conjecture of Cao holds for every complete soliton in this class. We write $\cR=\{x\in M\ :\ \nabla f(x)\neq0\}$ and $E_1=\nabla f/|\nabla f|$ on $\cR$. For $c\in\R$, we call $\Sigma=f^{-1}(c)\cap\cR$ a regular level hypersurface of $f$ when it is nonempty. We denote by $\cS X=\nabla_XE_1$ the shape operator of $\Sigma$, and by $\nabla^{\Sigma}$, $\Rm_{\Sigma}$, $\Ric_{\Sigma}$ and $R_{\Sigma}$ the Levi-Civita connection, the curvature tensor, the Ricci operator and the scalar curvature of the metric induced on $\Sigma$. We say that $\Sigma$ has parallel second fundamental form if $\nabla^{\Sigma}\cS=0$.

\begin{proposition}\label{prop_level}
Let $(M^n,g,f,\lambda)$ be a gradient Ricci soliton, and let $\Sigma$ be a regular level hypersurface of $f$. If $\Sigma$ has parallel second fundamental form, then $E_1$ is an eigenvector of $\Ric$ along $\Sigma$, and $|\nabla f|$, $R$ and the eigenvalues of $\Ric$ are constant on each connected component of $\Sigma$.
\end{proposition}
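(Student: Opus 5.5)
The plan is to combine the soliton equation \eqref{fundeq}, rewritten as $\nabla_X\nabla f=\lambda X-\Ric X$, with the traced Codazzi equation for $\Sigma$ and the classical identity $\nabla R=2\Ric(\nabla f)$. Write $u=|\nabla f|$, so that $\nabla f=uE_1$ on $\cR$. For $X$ tangent to $\Sigma$,
\[
\nabla_X\nabla f = X(u)E_1+u\,\cS X .
\]
Comparing tangential and normal components with $\lambda X-\Ric X$ gives two relations:
\[
u\,\cS X=\lambda X-(\Ric X)^{T},\qquad X(u)=-\Ric(X,E_1).
\]
Here $(\cdot)^T$ denotes the projection onto $T\Sigma$.

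\textbf{Step 1: $E_1$ is an eigenvector of $\Ric$.} The Codazzi equation for the hypersurface $\Sigma$ reads $(\nabla^\Sigma_X\cS)Y-(\nabla^\Sigma_Y\cS)X=\big(\Rm(X,Y)E_1\big)^T$, up to a sign convention that does not matter here. Take the trace over $X$ in a local orthonormal frame $e_2,\dots,e_n$ of $T\Sigma$. This gives
\[
\pm\Ric(Y,E_1)=Y(H)-(\divg_\Sigma\cS)(Y),
\]
where $H=\trg\cS$. Since $\nabla^\Sigma\cS=0$, the mean curvature $H$ is constant and $\divg_\Sigma\cS=0$. Hence $\Ric(Y,E_1)=0$ for every $Y$ tangent to $\Sigma$, which means $\Ric E_1=\Ric(E_1,E_1)E_1$ along $\Sigma$.

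\textbf{Step 2: constancy of $u$ and $R$.} By the normal relation, $X(u)=-\Ric(X,E_1)=0$ for every tangent $X$, so $u$ is constant on each connected component of $\Sigma$. Next, the identity $\nabla R=2\Ric(\nabla f)=2u\Ric(E_1,E_1)E_1$ shows that $\nabla R$ is normal to $\Sigma$. Therefore $R$ is constant on each component.

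\textbf{Step 3: constancy of the eigenvalues.} By Step 1, $\Ric$ preserves $T\Sigma$, and the tangential relation gives $\Ric|_{T\Sigma}=\lambda\Id-u\,\cS$. Since $\cS$ is parallel, its eigenvalues are constant on each connected component of $\Sigma$, and $u$ is constant there by Step 2. So the eigenvalues of $\Ric$ on $T\Sigma$ are constant. The remaining eigenvalue is
\[
\Ric(E_1,E_1)=R-(n-1)\lambda+uH,
\]
which is constant because $R$, $u$ and $H$ are.

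The only delicate point is the traced Codazzi identity in Step 1. One must check that the tangential part of $\nabla\cS$ coincides with $\nabla^\Sigma\cS$, which holds because $\cS$ maps $T\Sigma$ to $T\Sigma$. One must also keep track of the curvature sign convention, but the sign does not affect the conclusion $\Ric(Y,E_1)=0$. Everything else follows directly from \eqref{fundeq}.
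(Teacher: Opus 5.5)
Your proposal is correct and follows the paper's own proof. The contracted Codazzi equation with $\nabla^{\Sigma}\cS=0$ gives $\Ric(E_1,X)=0$ for $X\in T\Sigma$; the soliton equation and $\nabla R=2\Ric(\nabla f,\cdot)$ then make $|\nabla f|$ and $R$ constant. Finally, $\Ric_T=\lambda\,\Id-|\nabla f|\,\cS$ together with $\Ric(E_1,E_1)=R-\trg\Ric_T$ makes the eigenvalues constant. The only difference is cosmetic: you read off $X(|\nabla f|)=-\Ric(X,E_1)$ from the normal component of $\nabla_X\nabla f$, while the paper differentiates $|\nabla f|^2$, which is the same computation.
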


Since $\Delta f=n\lambda-R$, Proposition \ref{prop_level} shows that, if every regular level hypersurface has parallel second fundamental form, then $f$ is transnormal and isoparametric \cite{wang} in a neighborhood of each regular point. Our structural result is the following.

\begin{theorem}\label{thm_mwp}
Let $(M^n,g,f,\lambda)$ be a gradient Ricci soliton whose regular level hypersurfaces have parallel second fundamental form. Then every regular point of $f$ has a neighborhood isometric to a multiply warped product
\begin{align}\label{eq_mwp}
	I\times_{h_1}N_1^{r_1}\times\cdots\times_{h_k}N_k^{r_k},\qquad g=\mathrm{d}s^2+h_1^2g_{N_1}+\cdots+h_k^2g_{N_k},
\end{align}
with pairwise nonproportional warping functions, whose one-dimensional base $I$ is an integral curve of $E_1$, where $f$ depends only on $s$ and each fiber $N_i$ of dimension $r_i\geq1$ is Einstein. Furthermore, the regular level hypersurfaces of $f$ have parallel Ricci tensor.
\end{theorem}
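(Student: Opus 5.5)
Around a regular point $p$ I will work in the neighborhood $U\subset\cR$ foliated by regular level hypersurfaces, each having parallel second fundamental form by hypothesis. By Proposition \ref{prop_level}, $|\nabla f|$ and $R$ are constant on each connected component of each level set, so near $p$ we have $|\nabla f|=\phi(f)$. Hence $f$ is transnormal there. The first step is to show that $E_1$ is a geodesic field and that $s=\int df/\phi(f)$ is a distance function with $\nabla s=E_1$. Indeed, $\nabla^2f(E_1,X)=\frac12 X(|\nabla f|^2)/|\nabla f|=0$ for $X$ tangent to $\Sigma$, so $\nabla_{E_1}E_1=0$. Then the metric splits as $g=ds^2+g_s$ on $U\cong I\times\Sigma$, the integral curves of $E_1$ are unit-speed geodesics, and $f=f(s)$. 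Since $\nabla^2f=f''\,ds^2+f'\,\cS$, the shape operator is $\cS=(\nabla^2 f)|_{T\Sigma}/f'=(\lambda g-\Ric)|_{T\Sigma}/f'$. By Proposition \ref{prop_level} its eigenvalues, the principal curvatures, are constant on each leaf, with constant multiplicities on $U$ after shrinking (the number of distinct eigenvalues of a parallel tensor is locally constant, and I will pick $p$ generic or argue continuity).

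Next I will decompose each leaf. Because $\nabla^\Sigma\cS=0$, the eigendistributions $D_1,\dots,D_k$ of $\cS$ on $\Sigma$ are parallel, so by de Rham each leaf is locally a Riemannian product $N_1\times\cdots\times N_k$. To propagate this in $s$ I use the Riccati equation $\nabla_{E_1}\cS+\cS^2=-\Rm(\cdot,E_1)E_1$. Here I need the radial curvature to preserve the $D_i$. For this I use the soliton identity $\divg\Rm(\cdot,\cdot,\cdot)$, i.e. $\Rm(X,Y,\nabla f,Z)=(\nabla_Y\Ric)(X,Z)-(\nabla_X\Ric)(Y,Z)$, together with the Codazzi equation: parallel $\cS$ gives $\Rm(X,Y,E_1,Z)=0$ for tangent $X,Y,Z$. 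Consequently $\nabla_{E_1}\Ric$ commutes with $\Ric$ on $T\Sigma$, which combined with $\Ric|_{T\Sigma}=\lambda-f'\cS$ and differentiation in $s$ should show that $\Rm(\cdot,E_1)E_1$ commutes with $\cS$ and preserves each $D_i$. Then along each geodesic the $D_i$ are $E_1$-parallel with eigenvalue $\kappa_i(s)$ solving $\kappa_i'+\kappa_i^2=-K_i(s)$. Setting $h_i=\exp\int\kappa_i$ gives $\mathcal{L}_{E_1}g|_{D_i}=2\kappa_i g$, hence $g=ds^2+\sum h_i^2 g_{N_i}$. Distinct eigenvalues $\kappa_i\ne\kappa_j$ on an open set give nonproportional $h_i$; if two $\kappa$'s coincide on an open set I merge the corresponding factors.

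Finally, the Ricci tensor of a multiply warped product restricted to $N_i$ is $\Ric_{N_i}-(h_i''/h_i+\dots)h_i^2g_{N_i}$, with the correction terms depending only on $s$. Since $\Ric|_{D_i}=(\lambda-f'\kappa_i)g$ depends only on $s$, $\Ric_{N_i}=c_ig_{N_i}$. For $r_i=1$ this is automatic, and for $r_i\ge2$ the constancy of $c_i$ follows from the Schur lemma when $r_i\ge3$, and from the fact that the correction depends on $s$ only when $r_i=2$. Thus each $N_i$ is Einstein. The Ricci tensor of a level $\Sigma=\{s\}\times\prod N_i$ is then $\sum c_i h_i^{-2}\Id_{D_i}$, which is parallel for the product metric. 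The main obstacle I anticipate is the invariance of $D_i$ under $\Rm(\cdot,E_1)E_1$, i.e. showing the eigendistributions are preserved by the flow. I will attack it through the identity $\Rm(\cdot,\cdot)\nabla f$ in terms of $\nabla\Ric$, using that $\Ric$ and $\cS$ are simultaneously diagonal with eigenvalues depending on $s$ only.
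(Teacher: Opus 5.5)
The genuine gap is your assumption that the principal curvatures have constant multiplicities near $p$. Along a single leaf the number of distinct eigenvalues of $\cS$ is constant, but it can drop on isolated leaves, and the theorem is claimed at \emph{every} regular point. For instance, solve the ODE system in the proof of Theorem~\ref{thm_one_level} with two Einstein factors and initial data $\kappa_1\neq\kappa_2$, $\alpha_1=\alpha_2$. This gives a local soliton $\mathrm{d}s^2+h_1^2g_1+h_2^2g_2$ that satisfies the hypotheses, has nonproportional warping functions, and whose leaf $\{s=0\}$ is totally umbilical. At points of that leaf your only eigendistribution is all of $T\Sigma$, propagating it produces no warped structure, and no neighborhood has constant multiplicities.

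Choosing $p$ generic proves the theorem only on an open dense set, and ``continuity'' does not close the gap. You get multiply warped structures for $s<s_1$ and for $s>s_1$ separately, with eigendistributions constant in $s$ on each side, but nothing in your argument forces the two families to coincide. The relation $[\cS,\cS']=0$ alone lets eigenspaces jump through a crossing: take $\cS(s)=e^{-1/s^2}\operatorname{diag}(1,-1)$ for $s<0$, $\cS(s)=e^{-1/s^2}Q\operatorname{diag}(1,-1)Q^{-1}$ for $s>0$ with $Q$ a fixed rotation, and $\cS(0)=0$. Since $\Ric_{g_s}$ involves $x$-derivatives of $g_s$, you also have no ODE uniqueness available to rule this out.

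The missing idea is the paper's observation that, because $g_{ij}'=2\cS_{ij}$, the derivative $(\Gamma^\ell_{ij})'$ is a combination of components of $\nabla^{\Sigma}\cS$ and therefore vanishes. So all the metrics $g_s$ have one and the same Levi-Civita connection, and the fibers can be read off from the holonomy of this common connection, independently of $s$. On each irreducible nonflat de Rham factor, every parallel symmetric form, in particular every $g_s$, is a multiple of $g_{s_0}$. On the flat factor $\Ric_\Sigma=0$, so Lemma~\ref{lem_Sprime} becomes the linear equation $\cS_0'=-\lambda\Id+(f'-H)\cS_0$ with scalar coefficients, whose solution $a(s)\cS_0(s_0)+b(s)\Id$ has $s$-independent eigenspaces. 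This decomposition stays valid where principal curvatures coincide; one then groups proportional warping functions and checks that the remaining coincidences are isolated. Alternatively, once $\Ric_{g_s}$ is known to be an $s$-independent $(0,2)$-tensor, Lemma~\ref{lem_Sprime} together with $g_s'=2g_s(\cS\cdot,\cdot)$ is an ODE at each $x$, and uniqueness carries the decomposition from a generic leaf through $s_1$.

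Separately, the step you single out as the main obstacle is easy, but your sketch of it is circular. Since $(\nabla_{E_1}\Ric)|_{T\Sigma}=-f''\cS-f'\cS'$, the claim that $\nabla_{E_1}\Ric$ commutes with $\Ric$ on $T\Sigma$ is equivalent to $[\cS',\cS]=0$, which is what you want. It does not follow from $\Rm(X,Y,E_1,Z)=0$ for tangent $X,Y,Z$. The direct argument is Lemma~\ref{lem_Sprime} (or the Gauss equation): $\cS'=\Ric_\Sigma-\lambda\Id+(f'-H)\cS$, and $\Ric_\Sigma$ commutes with $\cS$ because the leaf is locally a Riemannian product along the eigendistributions of $\cS$.
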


The soliton equation on \eqref{eq_mwp}, with $f$ depending only on $s$, reduces to a system of ordinary differential equations for $h_1,\ldots,h_k$ and $f$, of the type studied by Ivey \cite{ivey1} and by Dancer and Wang \cite{danwan,danwan2} (see also \cite[Section 1]{dw11}). Together with Theorem \ref{thm_mwp}, this gives a local description of all gradient Ricci solitons whose regular level hypersurfaces have parallel second fundamental form. Conversely, the slices of \eqref{eq_mwp} are Riemannian products, as observed by Tran and Yamazaki for doubly warped products \cite[Remark 1.4]{tranyamazaki}, and their shape operator is $(h_i'/h_i)\,\Id$ on $TN_i$, so they have parallel second fundamental form. In this sense, Theorem \ref{thm_mwp} is a converse to the multiply warped ansatz underlying these constructions. The number $k$ of fibers is the number of distinct principal curvatures at a generic value of $s$, and the examples of Dancer and Wang show that it can be arbitrarily large. This should be compared with nontrivial gradient almost Ricci solitons with harmonic Weyl tensor, which, as shown in joint work with Borges and dos Santos \cite{bhs2026}, are locally multiply warped products with at most two fibers. Under the present hypothesis, no such bound occurs. On the other hand, the level hypersurfaces of the $U(m)$-invariant K\"ahler shrinking solitons of Feldman, Ilmanen and Knopf \cite{fik} are Berger spheres whose horizontal distribution, an eigendistribution of $\cS$, is not integrable, so these solitons of cohomogeneity one are not in our class.

The proof of Theorem \ref{thm_mwp} relies on an equation for the variation of $\cS$ along $E_1$, which is the tangential part of the soliton equation for a metric $\mathrm{d}s^2+g_s$ with $f=f(s)$ \cite[(1.16) and Remark 1.17]{dw11}, \cite[(2.6)]{tranyamazaki}.
The main point is that, when $\nabla^{\Sigma}\cS=0$, the induced metrics on the level hypersurfaces have the same Levi-Civita connection, and that, by this equation, the eigenspaces of $\cS$ on the flat factor of a local de Rham decomposition do not depend on $s$. This fixes the warped factors.

The hypothesis of Theorem \ref{thm_mwp} can be verified on a single level hypersurface.

\begin{theorem}\label{thm_one_level}
Let $(M^n,g,f,\lambda)$ be a gradient Ricci soliton. If some regular level hypersurface $\Sigma$ of $f$ satisfies
\[
	\nabla^{\Sigma}\cS=0\qquad\text{and}\qquad\nabla^{\Sigma}\Ric_{\Sigma}=0,
\]
then every regular level hypersurface of $f$ has parallel second fundamental form. Moreover, $\Ric(\nabla f,X)=0$ for every $X$ orthogonal to $\nabla f$.
\end{theorem}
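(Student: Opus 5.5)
We will propagate the two parallelism conditions off $\Sigma$ along the flow of $E_1$, using the soliton equation as an evolution system for the data $(\cS,\Ric_\Sigma,|\nabla f|)$ on the level hypersurfaces. On $\cR$ write $u=|\nabla f|$. Then $\nabla^2 f = \lambda g-\Ric$ gives $\nabla_{E_1}\nabla f$ and $\cS X=\nabla_X E_1 = u^{-1}(\lambda X-\Ric X)^{\top}$ for $X\perp E_1$. Hence $u\,\cS = \lambda\Id-\Ric^\top$ on the tangent spaces of level sets, and $X(u)=-\Ric(E_1,X)$ for $X$ tangent. By Proposition \ref{prop_level}, on $\Sigma$ the vector $E_1$ is a Ricci eigenvector, and $u$, $R$ and the eigenvalues of $\Ric$ are locally constant. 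This gives the second claim on $\Sigma$ itself; the task is to show it holds on all level sets.

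Introduce the flow parameter $s$ along integral curves of $E_1$. Near $\Sigma$, $\nabla u$ is not a priori parallel to $E_1$, so we use the normal flow of $\nabla f/u^2$, which maps level sets to level sets, $f=c+t$. We write the induced metrics $g_t$, the shape operators $\cS_t$ and the mixed term $\omega_t=\Ric(E_1,\cdot)^\top$. The standard equations give the following:
\begin{enumerate}[label=(\roman*)]
\item the Riccati equation $\nabla_{E_1}\cS = -\cS^2 - \Rm(\cdot,E_1)E_1$, together with the Gauss equation expressing $\Ric^\top$ through $\Ric_\Sigma$, $\cS$, $H$ and $\Rm(\cdot,E_1,E_1,\cdot)$;
\item the Codazzi equation $\divg_\Sigma\cS - dH = \pm\omega$;
\item the soliton identities $\divg\Ric=\frac12 dR$ and $dR = 2\Ric(\nabla f)$.
\end{enumerate}
Eliminating the normal curvature via $u\cS=\lambda-\Ric^\top$ yields a closed first-order system in $t$ for $(g_t,\cS_t,u,\omega_t)$. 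Its right-hand sides involve $\Ric_{g_t}$ and tangential derivatives. This is essentially the equation from \cite{dw11,tranyamazaki} cited before Theorem \ref{thm_one_level}, but now with the extra terms coming from $\omega$ and $du$.

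The key step is to show that the set of tensors $Q_t=(\nabla^{g_t}\cS_t,\ \nabla^{g_t}\Ric_{g_t},\ du|_{\Sigma_t},\ \omega_t)$ satisfies a linear homogeneous differential system along the flow: $\partial_t Q = L(Q,\text{tangential derivatives of }Q)$. Since $Q_0=0$ by hypothesis and Proposition \ref{prop_level}, we want to conclude $Q\equiv0$. Note that $\partial_t$ of $\nabla^{g_t}$ is given by the usual formula $\partial_t\Gamma = \frac12 g^{-1}(\nabla\dot g)$ with $\dot g = 2\cS/u$, which is linear in $\nabla\cS$ and $du$. Differentiating the Riccati/soliton equation for $\cS$ covariantly gives the evolution of $\nabla\cS$ in terms of $\nabla\Ric_{g_t}$, $\nabla u$ and $\nabla\omega$. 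The evolution of $\Ric_{g_t}$ is linear in second derivatives of $\dot g$, hence in $\nabla^2\cS$.

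The main obstacle is exactly this loss of derivatives: $\partial_t\nabla\Ric_{g_t}$ involves $\nabla^3\cS$. So a naive ODE/Gronwall argument on $Q$ fails, and the data are not analytic in general. First, we would use Bianchi and Codazzi together with $dR=2\Ric(\nabla f)$ to see that the dangerous terms combine into tangential derivatives of components of $Q$ itself. Then we would close the system by including all tangential covariant derivatives $\nabla^k Q$. Each equation $\partial_t\nabla^kQ$ is linear in $\nabla^{\le k+2}Q$, and all of these vanish at $t=0$ because $Q_0$ vanishes identically on $\Sigma$. If the loss cannot be avoided, the fallback is the following: solitons are real analytic in harmonic coordinates, so $f$ and the flow are analytic, and all $t$-derivatives of $Q$ at $t=0$ vanish by induction. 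This gives $Q\equiv0$ on the connected component of $\cR$ containing $\Sigma$, and then by analyticity and connectedness on every regular level set. Finally, $\omega\equiv0$ is precisely $\Ric(\nabla f,X)=0$ for $X\perp\nabla f$, and $\nabla^{g_t}\cS_t=0$ is the parallel second fundamental form on every regular level hypersurface.
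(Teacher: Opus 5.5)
Your analytic fallback is a workable proof and a genuinely different route from the paper's. The paper does not propagate a defect tensor. It uses the two hypotheses to split a piece $V\subset\Sigma$ into Einstein factors on which $\cS$ is scalar, and builds, by solving an ODE, a multiply warped product with the same Cauchy data $(g_0,\cS_0,f,f')$. It then observes that \eqref{eq_cauchy_system}, in geodesic coordinates, determines all $s$-derivatives at $s=0$ from lower-order ones and their $x$-derivatives. By analyticity the soliton coincides with the model near $V$, and parallelism and $\Ric(\nabla f,X)=0$ are read off from Lemma \ref{lem_mwp_ricci}. You instead show that $Q$ obeys a linear homogeneous evolution with loss of derivatives, and you kill its Taylor series at $t=0$. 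This avoids the splitting and the model, and it propagates $\nabla^{\Sigma}\Ric_{\Sigma}=0$ directly; the paper's route instead exhibits the local structure.

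Your key step needs no Bianchi identity and no $\mathrm{d}R=2\Ric(\nabla f,\cdot)$, but you should write it out. With normal speed $\phi=u^{-1}$ one has $\mathrm{d}\phi=u^{-2}\omega$, $\omega=\divg_{\Sigma}\cS-\mathrm{d}H$ by Codazzi, $\partial_tg_t=2\phi\,g_t(\cS\cdot,\cdot)$, and
\[
\partial_t\cS=-\nabla^{\Sigma}\nabla^{\Sigma}\phi+\phi\big(\Ric_{g_t}-\lambda\,\Id+(u-H)\,\cS\big).
\]
Moreover, $\partial_t\Ric_{g_t}$, as a $(0,2)$-tensor, is linear in second tangential derivatives of $\partial_tg_t$. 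Hence every term of $\partial_tQ$ carries a factor from $Q$ or its tangential derivatives, and $\mathrm{d}u$ and $\omega$ need not be listed separately.

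Drop the smooth ``closing of the hierarchy.'' When $\partial_t\nabla^kQ$ involves $\nabla^{k+2}Q$, there is no general uniqueness theorem in the $C^\infty$ category. Only the Taylor-series argument proves anything, and it relies on $g$, $f$ and the flow of $\nabla f/|\nabla f|^2$ being analytic.

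There is a genuine gap in your last step. The tensors $\nabla^{\Sigma}\cS$, $\nabla^{\Sigma}\Ric_{\Sigma}$ and $\omega$ are defined only on $\cR$ and in general do not extend analytically across the critical set $F$. Moreover, $\cR$ can be disconnected: for $N\times\R$ with $f=\frac{\lambda}{2}x^2$, the set $F=N\times\{0\}$ separates $M$. So ``analyticity and connectedness'' yields $Q\equiv0$, and $\Ric(\nabla f,X)=0$, only on the component of $\cR$ containing $\Sigma$, not on every regular level hypersurface.

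You need tensors that are analytic on all of $M$ and whose vanishing at regular points encodes the conditions. The paper first uses $A(X,Y)=\Ric(\nabla f,X)\,\mathrm{d}f(Y)-\Ric(\nabla f,Y)\,\mathrm{d}f(X)$, which gives $\Ric(\nabla f,X)=0$ on all of $M$. It then takes $L=|\nabla f|^2\Id-\nabla f\otimes\mathrm{d}f$ and the tensor $\cT(X,Y,Z)=(\nabla_{LX}\Ric)(LY,LZ)$. By Lemma \ref{lem_ricci_form}, $\cT$ vanishes at a regular point exactly when $\nabla^{\Sigma}\cS=0$ there. With this replacement and the computation above written out, your argument is complete.
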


By Theorem \ref{thm_mwp}, the regular level hypersurfaces have parallel Ricci tensor when they have parallel second fundamental form, so both conditions on $\Sigma$ are also necessary. The proof uses the analyticity of gradient Ricci solitons \cite{kotblms}. In geodesic coordinates around $\Sigma$, the soliton equation is an evolution system for the induced metrics, their shape operators and $f$, and the Cauchy data on $\Sigma$ coincide with those of a multiply warped product with Einstein fibers and potential function depending only on the base. Hence both solutions coincide near $\Sigma$, and analytic continuation gives the property on every regular level hypersurface.

We now turn to the conjecture of Cao. By \cite[Theorem 1.2]{pewy}, a nonsteady soliton with constant scalar curvature is rigid if and only if it is radially flat, and our approach is to derive radial flatness from the geometry of the level hypersurfaces and their collapse at the critical set of $f$.

\begin{definition}\label{def_tangential}
The Ricci tensor of a gradient Ricci soliton is tangentially parallel if $(\nabla_X\Ric)(Y,Z)=0$ for all $X,Y,Z$ orthogonal to $\nabla f$, at every regular point of $f$.
\end{definition}

This condition only involves the components of $\nabla\Ric$ tangent to the level hypersurfaces. Writing $\Ric_T$ for the restriction of $\Ric$ to $T\Sigma$, we have, for $X,Y,Z\in T\Sigma$,
\begin{align}\label{eq_tangential}
	(\nabla_X\Ric)(Y,Z)=(\nabla^{\Sigma}_X\Ric_T)(Y,Z)+\inner{\cS X}{Y}\Ric(E_1,Z)+\inner{\cS X}{Z}\Ric(Y,E_1),
\end{align}
since $\nabla_XY=\nabla^{\Sigma}_XY-\inner{\cS X}{Y}E_1$. Both derivatives coincide when $\Ric(E_1,\cdot)$ vanishes on $T\Sigma$, which is the case when $R$ is constant, since $\nabla R=2\Ric(\nabla f,\cdot)$ \cite{ccz,pewy}. The Ricci tensor of a multiply warped product \eqref{eq_mwp} satisfying \eqref{fundeq}, with $f$ depending only on $s$, is tangentially parallel. Indeed, $\nabla^2f=f''\mathrm{d}s^2+f'\nabla^2s$, where $\nabla^2s$ vanishes on $\partial_s$ and coincides with $\inner{\cS\cdot}{\cdot}$ on $T\Sigma$, so, by \eqref{fundeq}, $\Ric(E_1,\cdot)$ vanishes on $T\Sigma$ and $\Ric_T=\lambda\,\Id-f'\cS$, with $f'$ constant on each slice. Since the slices have parallel second fundamental form, \eqref{eq_tangential} gives $(\nabla_X\Ric)(Y,Z)=-f'\inner{(\nabla^{\Sigma}_X\cS)Y}{Z}=0$.

Our main result shows that, under constant scalar curvature, the geometry of a single regular level hypersurface already forces rigidity.

\begin{theorem}\label{thm_rigid}
Let $(M^n,g,f,\lambda)$ be a complete gradient Ricci soliton with $\lambda\neq0$ and constant scalar curvature. If some regular level hypersurface $\Sigma$ of $f$ satisfies
\[
	\nabla^{\Sigma}\cS=0\qquad\text{and}\qquad\nabla^{\Sigma}\Ric_{\Sigma}=0,
\]
then $M$ is rigid. In particular, if $\lambda>0$, then $M$ is isometric to a finite quotient of $N^k\times\R^{n-k}$, where $N$ is a compact Einstein manifold with $\Ric_N=\lambda g_N$, and $R=k\lambda$.
\end{theorem}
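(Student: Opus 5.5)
By Theorem \ref{thm_one_level}, every regular level hypersurface of $f$ has parallel second fundamental form. Theorem \ref{thm_mwp} then says that near every regular point $M$ is a multiply warped product \eqref{eq_mwp} with Einstein fibers and $f=f(s)$. The plan is to establish radial flatness, $\Rm(\nabla f,X,\nabla f,X)=0$, and then to invoke \cite[Theorem 1.2]{pewy}. Since $R$ is constant, this yields rigidity. For $\lambda>0$, the last sentence of the statement follows from the standard description of rigid shrinkers: $N$ is Einstein with positive constant $\lambda$, hence compact by Myers, so $\Gamma$ is finite, and $R=k\lambda$ with $k=\dim N$.

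Radial flatness on a warped product \eqref{eq_mwp} means $h_i''=0$ for every $i$, because $\Rm(\partial_s,X,\partial_s,X)=-(h_i''/h_i)|X|^2$ for $X\in TN_i$. We therefore aim to show that every principal curvature $\mu_i=h_i'/h_i$ satisfies $\mu_i'+\mu_i^2=0$. The soliton equation in the $E_1$ direction and the tangential directions gives
\[
f''=\lambda-\Ric(E_1,E_1)=\lambda+\sum_i r_i\frac{h_i''}{h_i}, \qquad \Ric_T=\lambda\Id-f'\cS .
\]
Constant $R$ gives $\Ric(\nabla f,\cdot)=\tfrac12\nabla R=0$, so $\nabla f$ lies in the kernel of $\Ric$ and $f''=\lambda$. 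Along each integral curve of $E_1$ we then have $f'(s)=\lambda s+c$, and we may reparametrize so that $f'=\lambda s$. The Riccati equation for $\cS$ on the eigenspace $TN_i$ reads $\mu_i'+\mu_i^2=-\Rm(E_1,X,E_1,X)/|X|^2=h_i''/h_i$. The eigenvalues of $\Ric$ on $TN_i$ are $\lambda-f'\mu_i$, and by \cite{fr16} (or by the isoparametric structure) these are locally constant where they vary continuously. Differentiating $\lambda-\lambda s\mu_i=\text{const}$ gives $\mu_i+s\mu_i'=0$, that is, $\mu_i=a_i/s$. Combined with the fact that the eigenvalues of $\Ric$ lie in $\{0,\lambda\}$-type constraints, this should force $a_i\in\{0,1\}$. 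In either case $\mu_i'+\mu_i^2=-a_i/s^2+a_i^2/s^2=0$, which is exactly $h_i''=0$.

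The main obstacle is justifying that the Ricci eigenvalues are constant along $s$, and not merely constant on each level hypersurface, which is all that Proposition \ref{prop_level} provides. I would first try the algebraic route. $R$ is constant, and the tangential Ricci eigenvalues $\lambda-\lambda s a_i/s$ must combine with the fiber Einstein equations $\Ric|_{TN_i}=(\rho_i/h_i^2)-(\ldots)$ to satisfy the ODE system. Alternatively, one can use $\Delta_f\Ric=2\lambda\Ric-2\Rm*\Ric$: with $R$ constant, $\trg\Ric^2=\lambda R$, and taking traces of powers on the warped product gives polynomial identities in the $\mu_i$ that force each $\mu_i s$ to be constant. Completeness is used to extend radial flatness from the dense open set $\cR$ (analyticity guarantees that $\cR$ is dense unless $f$ is constant, and constant $f$ makes $M$ Einstein and hence trivially rigid) to all of $M$ by continuity, and to apply \cite{pewy}. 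Through Theorem \ref{thm_mwp}, it also ensures that the local picture is valid around every regular point.
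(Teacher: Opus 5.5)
There is a genuine gap, at the step you yourself flag as the main obstacle. After normalizing $f'=\lambda s$, $s$ is the distance $r$ to the critical set, $sH=m$, and you set $u_i=s\mu_i$. Radial flatness then amounts to showing that each $u_i$ is constant in $s$. Once that is known, \eqref{eq_Sprime} does force $u_i\in\{0,1\}$ because $\lambda\neq0$, so that part of your sketch can be repaired.

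The constancy itself is never proved, and the justifications you offer do not work:
\begin{itemize}
\item The isoparametric property only makes $|\nabla f|^2$ and $\Delta f$ functions of $f$.
\item \cite{fr16} does not show that the Ricci eigenvalues are constant. It shows that constant rank of $\Ric$ (or constancy of $\trg\Ric^3$ and $\trg\Ric^4$) is \emph{equivalent} to rigidity. Appealing to it, or to eigenvalues lying in $\{0,\lambda\}$, is therefore circular.
\item The only trace identities available a priori are $\trg\Ric=R$ and $|\Ric|^2=\lambda R$, that is, $\sum_ir_iu_i=m=\sum_ir_iu_i^2$ as in \eqref{eq_traces}. With at most two fibers these do force the $u_i$ to be constant, since a line meets an ellipse in at most two points; this recovers the case already covered by \cite[Theorem 9]{fr16}. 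With three or more fibers they no longer determine the $u_i$, and the number of fibers is not bounded a priori.
\item The suggested use of $\Delta_f\Ric$ brings in $\Rm*\Ric$, which you do not control.
\end{itemize}
Note also that completeness plays only a cosmetic role in your argument, whereas the paper needs it essentially.

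What is missing is the global input from the critical set $F=f^{-1}(0)$, which is nonempty by \cite[Lemma 6]{fr16}. The paper's argument runs as follows.
\begin{itemize}
\item Using completeness, it follows each integral curve of $-\nabla r$ down to $F$ and keeps the multiply warped structure on a whole tube $(0,T)\times V$ along it.
\item Smoothness of $g$ and $f$ at $F$ (Lemma~\ref{lem_tube}) gives $u_i(r)\to\varepsilon_i\in\{0,1\}$ as $r\to0^+$.
\item Bounded curvature near $F$, together with the Gauss equation $\Rm(e_i,e_j,e_i,e_j)=-\xi_i\xi_j$, excludes two collapsing factors. The same argument gives $c_0=m-1$ for the unique collapsing fiber $N_0$, which has dimension $m$.
\item Then \eqref{eq_traces} yields $u\leq1$ and $h\leq r$ on $N_0$. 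Using \eqref{eq_Sprime}, $\Phi(r)=r^{m-1}e^{-\lambda r^2/2}(1-u(r))$ is nonnegative, nonincreasing and tends to $0$ as $r\to0^+$, hence vanishes identically.
\item So $u\equiv1$ on $N_0$, the norm identity in \eqref{eq_traces} forces $u_i\equiv0$ on the other factors, and $\cS'+\cS^2=0$ gives radial flatness.
\end{itemize}
Without an argument of this kind, your approach as it stands can only handle the case of at most two fibers.
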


Conversely, every regular level hypersurface of a finite quotient of $N^k\times\R^{n-k}$ as above is locally isometric to a Riemannian product $N\times\mathbb{S}^{n-k-1}(r)$, and hence satisfies both conditions. If $\lambda>0$ and $R=n\lambda$, then $\Delta f=n\lambda-R=0$, and $|\nabla f|^2-2\lambda f$ is constant, since $R+|\nabla f|^2-2\lambda f$ is constant \cite{ccz,pewy}, and the Bochner formula gives $\nabla^2f=0$, so that $M$ is a compact Einstein manifold. Hence the conjecture of Cao is equivalent to the existence, on every complete gradient shrinking Ricci soliton with constant scalar curvature $R<n\lambda$, of a regular level hypersurface $\Sigma$ with $\nabla^{\Sigma}\cS=0$ and $\nabla^{\Sigma}\Ric_{\Sigma}=0$.

By Theorems \ref{thm_mwp} and \ref{thm_one_level}, some regular level hypersurface satisfies both conditions if and only if every regular level hypersurface of $f$ has parallel second fundamental form. Thus Theorem \ref{thm_rigid} is equivalent to the following result.

\begin{corollary}\label{cor_open}
Let $(M^n,g,f,\lambda)$ be a complete gradient Ricci soliton with $\lambda\neq0$ and constant scalar curvature. If every regular level hypersurface of $f$ has parallel second fundamental form, then $M$ is rigid. This holds, in particular, if every regular point of $f$ has a neighborhood isometric to a multiply warped product \eqref{eq_mwp} on which $f$ depends only on $s$.
\end{corollary}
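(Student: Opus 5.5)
The corollary should follow from Theorem \ref{thm_rigid} once we find a single regular level hypersurface $\Sigma$ with $\nabla^{\Sigma}\cS=0$ and $\nabla^{\Sigma}\Ric_{\Sigma}=0$.

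First we check that $f$ has a regular point, so that a regular level hypersurface exists. If $\nabla f\equiv0$ on $M$, then $\Ric=\lambda g$ and $M$ is Einstein. This is the degenerate case $N=M$, $k=0$ of rigidity, or it can be treated directly. So we may assume $\cR\neq\emptyset$. Fix any regular level hypersurface $\Sigma=f^{-1}(c)\cap\cR$.

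By hypothesis, $\Sigma$ has parallel second fundamental form. Since every regular level hypersurface has this property, Theorem \ref{thm_mwp} applies. Its last assertion says that the regular level hypersurfaces have parallel Ricci tensor, so $\nabla^{\Sigma}\Ric_{\Sigma}=0$ on $\Sigma$. Both hypotheses of Theorem \ref{thm_rigid} therefore hold for $\Sigma$. Completeness, $\lambda\neq0$ and constancy of $R$ are assumed, so Theorem \ref{thm_rigid} gives that $M$ is rigid.

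For the final sentence, suppose every regular point of $f$ has a neighborhood isometric to a multiply warped product \eqref{eq_mwp} with $f=f(s)$. Near such a point the regular level hypersurfaces of $f$ are open pieces of the slices $\{s=\text{const}\}$, because $f'\neq0$ there. As observed after Theorem \ref{thm_mwp}, these slices are Riemannian products, and their shape operator equals $(h_i'/h_i)\Id$ on $TN_i$. That operator is constant on each slice and preserves the parallel product distributions, so $\nabla^{\Sigma}\cS=0$. Parallelism of the second fundamental form is a local condition, so every regular level hypersurface has it, and the first part applies.

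The only point needing care is that the local product structure really makes $\cS$ parallel. This holds because each $TN_i$ is a $\nabla^{\Sigma}$-parallel distribution of the product metric $h_1^2g_{N_1}+\cdots+h_k^2g_{N_k}$ on a slice. Then $\cS=\sum_i (h_i'/h_i)P_i$, where each $P_i$ is the projection onto $TN_i$ and is parallel. Everything else is a direct application of Theorems \ref{thm_mwp} and \ref{thm_rigid}.
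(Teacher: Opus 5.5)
Your deduction of the corollary from Theorem~\ref{thm_rigid} is valid as an implication. Your treatment of the last sentence ($\cS=\sum_i (h_i'/h_i)P_i$ with parallel projections $P_i$ on the slices) matches the paper's. The gap is that the argument is circular within the paper. Theorem~\ref{thm_rigid} is not proved independently: its proof applies Theorem~\ref{thm_one_level} to get parallel second fundamental form on every regular level hypersurface, and then invokes this very corollary. When the paper calls the two statements ``equivalent'', it means the whole rigidity argument lives in the proof of Corollary~\ref{cor_open}. So by citing Theorem~\ref{thm_rigid} you assume what is to be shown. Completeness and constancy of $R$ enter only as hypotheses passed along, yet they must do real work. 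The non-rigid complete shrinkers mentioned after Theorem~\ref{thm_cao} are multiply warped products with $f=f(s)$, so their level hypersurfaces also have parallel second fundamental form.

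What is missing is a direct proof of radial flatness; the Petersen--Wylie criterion (constant $R$ and $\Rm(\nabla f,X,\nabla f,X)=0$) then gives rigidity. The paper proceeds as follows.

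Constant $R$ gives $\Ric(\nabla f,\cdot)=0$ and, after normalizing, $|\nabla f|^2=2\lambda f$. Hence $f=\lambda r^2/2$, where $r$ is the distance to the nonempty critical set $F$, and \eqref{eq_m} gives $rH=r^2|\cS|^2=m$ with $m$ constant.

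Along a normal geodesic from $p\in F$, the multiply warped structure from the proof of Theorem~\ref{thm_mwp} holds on a whole set $(0,T)\times V$. Lemma~\ref{lem_tube} shows that each $u_i=rh_i'/h_i$ extends to $r=0$ with value in $\{0,1\}$.

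Boundedness of the curvature at $p$, through the Gauss equation, does two things. It rules out two collapsing factors, since then $\Rm(e_i,e_j,e_i,e_j)=-\xi_i\xi_j\sim -r^{-2}$. It also forces $c_0=m-1$ on the single collapsing fiber $N_0$, which has dimension $m$.

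Then \eqref{eq_traces} gives $u\le1$ on $N_0$, hence $h\le r$. Using \eqref{eq_Sprime} on $TN_0$, the function $\Phi=r^{m-1}e^{-\lambda r^2/2}(1-u)$ is nonnegative, nonincreasing, and tends to $0$ as $r\to0^+$. So $u\equiv1$ and $h=r$, and the other principal curvatures vanish by \eqref{eq_traces}. This gives $\cS'+\cS^2=0$, and the Riccati equation yields radial flatness.

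Some argument of this kind, exploiting how the level hypersurfaces collapse at $F$, is the actual content of the corollary, and it is absent from your proposal.
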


In particular, the conjecture of Cao holds for complete gradient shrinking Ricci solitons whose regular level hypersurfaces have parallel second fundamental form, a large class which contains, for instance, all the multiply warped products \eqref{eq_mwp} with potential function depending only on the base. When $k\leq2$, the Ricci tensor of such a soliton has at most three distinct eigenvalues, and rigidity also follows from \cite[Theorem 9]{fr16}. In terms of Definition \ref{def_tangential}, Corollary \ref{cor_open} can be stated as follows.

\begin{theorem}\label{thm_cao}
Let $(M^n,g,f,\lambda)$ be a complete gradient shrinking Ricci soliton with constant scalar curvature and tangentially parallel Ricci tensor. Then $M$ is isometric to a finite quotient of $N^k\times\R^{n-k}$, where $N$ is a compact Einstein manifold.
\end{theorem}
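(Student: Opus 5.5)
Theorem \ref{thm_cao} is deduced from Corollary \ref{cor_open} (equivalently Theorem \ref{thm_rigid}). We show that, under constant scalar curvature, tangential parallelism of $\Ric$ is the same as parallel second fundamental form of every regular level hypersurface.

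First we use constancy of $R$. Since $\nabla R=2\Ric(\nabla f,\cdot)$, we get $\Ric(E_1,X)=0$ for every $X\in T\Sigma$ on $\cR$. Two consequences follow.
\begin{itemize}
\item The last two terms of \eqref{eq_tangential} vanish. Hence tangential parallelism of $\Ric$ is equivalent to $\nabla^{\Sigma}\Ric_T=0$ on every regular level hypersurface $\Sigma$.
\item Restricting \eqref{fundeq} to $T\Sigma$ and using $\nabla^2f(X,Y)=|\nabla f|\inner{\cS X}{Y}$ for $X,Y\in T\Sigma$ gives
\[
\Ric_T=\lambda\,\Id-|\nabla f|\,\cS .
\]
\end{itemize}
By the standard identity $R+|\nabla f|^2-2\lambda f=\text{const}$ and constancy of $R$, we get $|\nabla f|^2=2\lambda f+c$. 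So $|\nabla f|$ is constant on each level set, and it is nonzero on $\cR$. Differentiating $\Ric_T$ along $\Sigma$ therefore gives $\nabla^{\Sigma}\Ric_T=-|\nabla f|\,\nabla^{\Sigma}\cS$. Thus tangential parallelism of $\Ric$ is equivalent to $\nabla^{\Sigma}\cS=0$ on every regular level hypersurface.

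Now we apply Corollary \ref{cor_open} with $\lambda>0$. Here $f$ is nonconstant: if it were constant, $\nabla f\equiv 0$, $\Ric=\lambda g$, and $M$ is Einstein (the case $k=n$). The corollary then gives that $M$ is rigid. Its "in particular" clause, as in Theorem \ref{thm_rigid}, identifies $M$ with a finite quotient of $N^k\times\R^{n-k}$, where $N$ is compact Einstein with $\Ric_N=\lambda g_N$. The case $\cR=\emptyset$ is the Einstein case already handled.

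The main obstacle is that the argument rests entirely on Corollary \ref{cor_open}. That corollary requires the hard rigidity input of Theorem \ref{thm_rigid}: radial flatness obtained from the collapse of level sets at the critical set. The reduction itself is formal. The only delicate point to check is that $|\nabla f|$ is constant along each connected component of $\Sigma$, so that it passes through $\nabla^{\Sigma}$. This is guaranteed by the identity $R+|\nabla f|^2-2\lambda f=\text{const}$ once $R$ is constant.
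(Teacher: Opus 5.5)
Your proposal is correct and is essentially the paper's proof. You reprove Lemma \ref{lem_ricci_form} inline, using $\Ric(\nabla f,\cdot)=0$ and $|\nabla f|^2=2\lambda f+c$ to get $(\nabla_X\Ric)(Y,Z)=-|\nabla f|\inner{(\nabla^{\Sigma}_X\cS)Y}{Z}$, and then invoke Corollary \ref{cor_open}, exactly as the paper does. The only slight imprecision is that the finite-quotient conclusion is not part of the statement of Corollary \ref{cor_open}; it comes from the last step of its proof (Myers' theorem and finiteness of $\Gamma$), equivalently from the ``in particular'' part of Theorem \ref{thm_rigid}.
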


By the discussion following Definition \ref{def_tangential}, the complete shrinking solitons of \cite[Theorem B]{angenent}, which are not rigid and have nonconstant scalar curvature, have tangentially parallel Ricci tensor, so the assumption on the scalar curvature in Theorems \ref{thm_rigid} and \ref{thm_cao} cannot be removed. In the shrinking case, however, it is not needed when the level hypersurfaces have at most one nonzero principal curvature, as is the case for the regular level hypersurfaces $N\times\mathbb{S}^{n-k-1}(r)$ of the rigid examples.

\begin{theorem}\label{thm_one_curvature}
Let $(M^n,g,f,\lambda)$ be a complete gradient shrinking Ricci soliton. Suppose that the regular level hypersurfaces of $f$ have parallel second fundamental form and at most one nonzero principal curvature. Then $M$ is rigid.
\end{theorem}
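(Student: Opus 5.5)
By Theorem \ref{thm_mwp}, around every regular point of $f$ the soliton is a multiply warped product \eqref{eq_mwp} with Einstein fibers and $f=f(s)$. Under the present hypothesis the shape operator has at most one nonzero principal curvature, so locally
\[
g=\mathrm{d}s^2+g_{N}+h^2g_{F}.
\]
Here $N$ is the fiber with constant warping function (zero principal curvature, normalized to $1$), $F$ is the fiber with principal curvature $h'/h$, and $\dim F=m$. Either factor may be absent. I will show that $h$ is forced to be a linear function of $s$, i.e. that the $(s,F)$-part is a piece of flat $\R^{m+1}$ written in polar coordinates. Then $\Rm(\nabla f,X,\nabla f,X)=0$ and $R$ is constant, so rigidity follows from \cite[Theorem 1.2]{pewy}.

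\textbf{Step 1: the ODE system.} On this ansatz the soliton equation gives the following.
\begin{itemize}[leftmargin=*]
\item On $N$: $\Ric_N=\lambda g_N$. The $f'\cS$ term vanishes there because the principal curvature is $0$, so $N$ is Einstein with constant $\lambda$.
\item In the $\partial_s$ direction: $-m\,h''/h+f''=\lambda$.
\item On $F$: $\mu/h^2-(m-1)(h'/h)^2-h''/h+f'h'/h=\lambda$, where $\Ric_F=\mu g_F$.
\end{itemize}
I will combine these with the standard identity $R+|\nabla f|^2-2\lambda f=C$ and the trace identity $R+\Delta f=n\lambda$.

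\textbf{Step 2: reduction to the warped part.} The part $\mathrm{d}s^2+h^2g_F$ is itself a gradient shrinking soliton of cohomogeneity one, with an Einstein fiber and potential $f(s)$, since the $N$ factor just splits off. The claim is then that a complete such soliton, with $f$ proper, must be the Gaussian soliton on $\R^{m+1}$ or a cylinder. For the cylinder, $h$ is constant, which means the principal curvature is zero and the factor merges into $N$.

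\textbf{Step 3: the global argument, which I expect to be the main obstacle.} I would follow the integral curve of $E_1$ from the minimum set of $f$, which exists by properness of $f$ for shrinkers. There are two alternatives.
\begin{itemize}[leftmargin=*]
\item Some $F$-directions collapse at the critical set. Smoothness at the focal submanifold forces $h(0)=0$, $h'(0)=1$, $F$ a round sphere, and $f'(0)=0$. The $F$-fibers then shrink at the critical set while the $N$-directions do not, so the critical set is locally a copy of $N$, which is totally geodesic with zero principal curvature.
\item No direction collapses. Then $\cS$ would be zero on the complete manifold at the minimum level. I would argue that $h$ is then constant and we are in the trivial case.
\end{itemize}
In the collapsing case, I would integrate the $\partial_s$ equation against $h^m$ to control $f'$. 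The goal is to prove $h''\equiv0$ by a maximum-principle or monotonicity argument on $Q=h'^2+\tfrac{2}{m-1}\cdots$. Concretely, I would use the shrinking-specific asymptotics $|\nabla f|^2\le 2\lambda f+C$ and $f\sim\tfrac{\lambda}{4}s^2$ along the ray. Given these, the quantity $(h')^2-1$, together with the Einstein constant of the sphere, satisfies a linear ODE whose only solution with bounded growth and $h'(0)=1$ is $h'\equiv1$.

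If this direct ODE route stalls, my fallback is to derive constancy of $R$ from Step 1. Here $R$ is a function of $s$ alone, namely $R=k\lambda+\mu m/h^2-\ldots$. I would show that $\nabla R=2\Ric(\nabla f,\cdot)$ combined with completeness forces $R'=0$ by a growth argument. After that, Corollary \ref{cor_open} applies directly and finishes the proof.
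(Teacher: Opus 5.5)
Your Step 1 is correct, and the conclusion you aim for (the warped part is a piece of the Gaussian soliton) is what actually happens. But Step 3, which carries the whole theorem, is a plan rather than an argument, and its one concrete claim is false. The data $h(0)=0$, $h'(0)=1$, $f'(0)=0$ at a collapse point do not force $h'\equiv1$. The system of Step 1 is singular at $s=0$, so any linear equation for $(h')^2-1$ has coefficients that blow up like $1/s$ and $1/s^2$, and vanishing initial data give no uniqueness. For instance, $f$ constant, $\Ric_F=(m-1)g_F$ and $h=a\sin(s/a)$ with $a^2=m/\lambda$ satisfy all three equations of Step 1 with exactly these data and with $h'$ bounded. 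Moreover, to lowest order the equations only impose $h'''(0)=(f''(0)-\lambda)/m$, leaving $f''(0)$ free. Excluding the other local solutions requires a global completeness argument (for round fibers this is Kotschwar's classification \cite{kot08}), and you do not supply one: $Q=h'^2+\tfrac{2}{m-1}\cdots$ is never specified, and the case without collapse is only asserted. The focal-set picture is not available either. Without constant scalar curvature there is no global identity like $|\nabla f|^2=2\lambda f$, which is what produced Lemma \ref{lem_tube} and guaranteed in Section \ref{sec_rigid} that integral curves of $-\nabla r$ reach the critical set. Here you know neither that integral curves of $-E_1$ reach it nor how $f$ behaves there. (Also, $f\sim\frac{\lambda}{2}s^2$, not $\frac{\lambda}{4}s^2$.) Your fallback fails as stated: $\nabla R=2\Ric(\nabla f,\cdot)$ holds on every soliton, including the nonrigid complete shrinkers of \cite[Theorem B]{angenent}, whose level hypersurfaces have parallel second fundamental form and whose scalar curvature is nonconstant. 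So no growth argument based on it can give $R'=0$ unless it uses the one-curvature hypothesis in an essential way.

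Step 2 hides a second gap: your factor $N$ splits off only on a chart $W$, and you never globalize this. The paper relies repeatedly on analyticity \cite{kotblms}:
\begin{itemize}
\item to see that on each chart only one $\xi_i$ is not identically zero (the pointwise hypothesis alone allows two $\xi_i$ with disjoint supports);
\item to propagate the case $\cS\equiv0$ on one chart, where $R=(n-1)\lambda$ and $\Rm(\nabla f,\cdot,\nabla f,\cdot)=0$, to all of $M$;
\item on the universal cover, to extend the parallel distribution tangent to $N$ globally, since holonomy equals local holonomy for analytic metrics \cite{kn}.
\end{itemize}
De Rham then gives $\widetilde M=P\times Q$ with $\widetilde f=f_P+f_Q$, where $f_Q$ is constant and $Q$ is Einstein with constant $\lambda$. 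The idea you are missing for the core step is that no ODE analysis is needed. $P$ is a complete shrinker of dimension $m+1$ whose level hypersurfaces are totally umbilical, with $R_P$ constant on them by Proposition \ref{prop_level}. Hence the tensor $D$ of Cao and Chen vanishes on an open set \cite[Proposition 3.1]{caoqian}, and so everywhere. For $m\geq3$, \cite[Corollary 5.1]{caoqian} then makes $P$ Einstein, or a finite quotient of the Gaussian soliton or of the product of an $m$-dimensional Einstein manifold with $\R$. For $m\leq2$ the low-dimensional classifications \cite{ham88,ivey93,ccz,caozhou} do the same, using $R_P=a\,e^{f_P}$ when $m=1$. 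In every case $P$ has constant scalar curvature and is radially flat, this passes to $M$, and \cite[Theorem 1.2]{pewy} concludes.
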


This contains the case of totally umbilical level hypersurfaces. In this case, when $n\geq4$, the scalar curvature is constant on each level hypersurface, by Proposition \ref{prop_level}, so the tensor $D$ of Cao and Chen vanishes \cite[Proposition 3.1]{caoqian}, and the result follows from \cite[Corollary 5.1]{caoqian}. Compare with the classification of complete rotationally invariant shrinking solitons by Kotschwar \cite{kot08}.

The proof of Corollary \ref{cor_open} uses the critical set $F$ of $f$. After normalizing $f$, we have $f=\lambda r^2/2$, where $r$ is the distance to $F$, and, if $\cS$ is the shape operator with respect to $\nabla r$ and $H=\trg\cS$ is the mean curvature, then $rH=r^2|\cS|^2$ is constant. Smoothness of $g$ at $F$ shows that at most one fiber collapses at $F$, and a monotone quantity shows that its warping function is $r$ and that the other ones are constant. Hence $\nabla_{\nabla r}\cS+\cS^2=0$, which gives radial flatness. Theorem \ref{thm_rigid} then follows from Theorem \ref{thm_one_level}. For Theorem \ref{thm_one_curvature}, we show that the universal covering of $M$ splits off an Einstein factor, and that the other factor has totally umbilical level hypersurfaces, so that a result of Cao and Chen \cite{caoqian} (or, in dimensions two and three, the known classifications) applies.

\section{Preliminaries}\label{sec_prelim}

All manifolds are assumed to be connected. We adopt the following convention for the curvature tensor:
\[
	\Rm(X,Y,Z,T)=\inner{\nabla_{Y}\nabla_{X}Z-\nabla_{X}\nabla_{Y}Z+\nabla_{[X,Y]}Z}{T},
\]
so that $\Rm(X,Y,X,Y)$ is the sectional curvature of $\mathrm{span}\{X,Y\}$ for orthonormal $X,Y$, and $\Ric(Y,Z)=\sum_i\Rm(e_i,Y,e_i,Z)$ for an orthonormal frame $\{e_i\}$. We use the same symbol for the Ricci tensor and the corresponding self-adjoint endomorphism. Recall that a gradient Ricci soliton satisfies
\begin{align}\label{eq_identities}
	R+\Delta f=n\lambda,\qquad \nabla R=2\Ric(\nabla f,\cdot),\qquad R+|\nabla f|^2-2\lambda f=\mathrm{const},
\end{align}
see \cite{ccz,pewy}, and $\Delta_fR=2\lambda R-2|\Ric|^2$, where $\Delta_f=\Delta-\inner{\nabla f}{\nabla\cdot}$. Differentiating \eqref{fundeq} also gives
\begin{align}\label{eq_soliton_commutation}
	\Rm(X,Y,\nabla f,Z)=(\nabla_X\Ric)(Y,Z)-(\nabla_Y\Ric)(X,Z).
\end{align}

Let $\Sigma$ be a hypersurface with unit normal field $E_1$, shape operator $\cS X=\nabla_XE_1$ and mean curvature $H=\trg\cS$, and denote its intrinsic quantities as in Section \ref{sec_intro}. We recall the Codazzi, Gauss and Riccati equations for families of equidistant hypersurfaces, in the form used for cohomogeneity one metrics by Eschenburg and Wang \cite{ew00}; see also \cite[(1.2)--(1.4)]{dw11} and \cite[Section 2.2]{tranyamazaki}. With our conventions,
\begin{align}\label{eq_codazzi}
	\Ric(E_1,X)=(\divg_{\Sigma}\cS)(X)-X(H),\qquad X\in T\Sigma,
\end{align}
where $(\divg_{\Sigma}\cS)(X)=\sum_a\inner{(\nabla^{\Sigma}_{e_a}\cS)e_a}{X}$ for an orthonormal frame $\{e_a\}$ of $T\Sigma$, and, for $Y,T\in T\Sigma$,
\begin{align}\label{eq_gauss}
	\Ric(Y,T)=\Rm(E_1,Y,E_1,T)+\Ric_{\Sigma}(Y,T)-H\inner{\cS Y}{T}+\inner{\cS^2Y}{T}.
\end{align}

Suppose now that $s$ is a smooth function with $|\nabla s|=1$, and set $E_1=\nabla s$. Then $\nabla_{E_1}E_1=0$, and following the integral curves of $E_1$ from a level hypersurface of $s$ we obtain, near any point, coordinates $(s,x^2,\ldots,x^n)$ in which
\[
	g=\mathrm{d}s^2+\sum_{i,j\geq2}g_{ij}(s,x^2,\ldots,x^n)\,\mathrm{d}x^i\otimes\mathrm{d}x^j.
\]
We write $g_s$ for the metric induced on the level hypersurfaces of $s$, regarded as a family of metrics in the coordinates $x^2,\ldots,x^n$, so that $\nabla^{\Sigma}$ and $\Ric_{\Sigma}$ are the Levi-Civita connection and the Ricci operator of $g_s$. We denote by a prime the derivative with respect to $s$ of functions and of the components of tensors in these coordinates. Then $g_s'=2g_s(\cS\cdot,\cdot)$, see \cite[Section 1]{dw11} and \cite[Section 2.2]{tranyamazaki},
and $\cS'=\nabla_{E_1}\cS$ on tangent vectors, since $\partial_s$ commutes with the coordinate fields. The Riccati equation $\Rm(E_1,Y,E_1,T)=-\inner{(\cS'+\cS^2)Y}{T}$ and \eqref{eq_gauss} give
\begin{align}\label{eq_riccati}
	\Ric(Y,T)=\Ric_{\Sigma}(Y,T)-H\inner{\cS Y}{T}-\inner{\cS'Y}{T},\qquad \Ric(E_1,E_1)=-H'-|\cS|^2,
\end{align}
for $Y,T$ tangent to the level hypersurfaces of $s$, where $|\cS|^2=\trg\cS^2$. Together with \eqref{eq_codazzi}, this shows that the Ricci tensor of $g$ is determined by the intrinsic Ricci tensor of the level hypersurfaces of $s$, their shape operator and its evolution along $E_1$, as observed by Tran and Yamazaki \cite[Section 2.2]{tranyamazaki}.

We finally recall the connection and the Ricci tensor of a multiply warped product with a one-dimensional base (see \cite{dobarro} and \cite[Lemma 2.5]{bhs2026}).

\begin{lemma}\label{lem_mwp_ricci}
Let $M=I\times_{h_1}N_1^{r_1}\times\cdots\times_{h_k}N_k^{r_k}$ be a multiply warped product as in \eqref{eq_mwp}, and let $V$ and $W$ be lifts of vector fields on $N_i$ and $N_j$, respectively. Then $\nabla_{\partial_s}\partial_s=0$, $\nabla_V\partial_s=\nabla_{\partial_s}V=\frac{h_i'}{h_i}V$, $\Ric(\partial_s,V)=0$, $\Ric(V,W)=0$ if $i\neq j$,
\[
	\Ric(\partial_s,\partial_s)=-\sum_{\ell=1}^kr_\ell\frac{h_\ell''}{h_\ell},
\]
and, if $i=j$,
\[
	\Ric(V,W)=\Ric_{N_i}(V,W)-\left(\frac{h_i''}{h_i}+(r_i-1)\frac{(h_i')^2}{h_i^2}+\sum_{\ell\neq i}r_\ell\frac{h_i'h_\ell'}{h_ih_\ell}\right)g(V,W).
\]
\end{lemma}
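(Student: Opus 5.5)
The statement immediately above is Lemma~\ref{lem_mwp_ricci}, a standard computation for multiply warped products. I will derive it from the Koszul formula, with a preliminary simplification: since each fiber $N_i$ sits inside $g$ with metric $h_i^2g_{N_i}$, it is enough to work with lifts of vector fields on the fibers and with $\partial_s$.

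The first step is the connection. For lifts $V,W$ of fields on $N_i,N_j$ we have $[\partial_s,V]=0$, $g(V,W)=0$ if $i\neq j$, and $g(V,W)=h_i^2g_{N_i}(V,W)$ if $i=j$. The Koszul formula then gives:
\begin{itemize}
\item $\nabla_{\partial_s}\partial_s=0$;
\item $\nabla_V\partial_s=\nabla_{\partial_s}V=(h_i'/h_i)V$;
\item $\nabla_VW=0$ if $i\neq j$;
\item $\nabla_VW=\nabla^{N_i}_VW-h_ih_i'g_{N_i}(V,W)\partial_s$ if $i=j$.
\end{itemize}
The fibers are therefore totally umbilic in $M$ and totally geodesic in the slice, and each slice $\{s\}\times\prod N_\ell$ is a Riemannian product. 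In particular, the slice shape operator is $\cS=(h_i'/h_i)\Id$ on $TN_i$.

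Next I will use the formalism of Section~\ref{sec_prelim} with $E_1=\partial_s$, so that $H=\sum_\ell r_\ell h_\ell'/h_\ell$.
\begin{itemize}
\item By \eqref{eq_codazzi}, $\Ric(\partial_s,V)=(\divg_\Sigma\cS)(V)-V(H)$. This vanishes because $\cS$ is $\nabla^\Sigma$-parallel on the product slice (its eigenvalues are constant on the slice and its eigendistributions are parallel), and $H$ depends only on $s$.
\item By \eqref{eq_riccati}, $\Ric(\partial_s,\partial_s)=-H'-|\cS|^2$. Since $(h'/h)'+(h'/h)^2=h''/h$, this equals $-\sum_\ell r_\ell h_\ell''/h_\ell$.
\item For tangent $Y,T$, \eqref{eq_riccati} gives $\Ric=\Ric_\Sigma-H\inner{\cS\cdot}{\cdot}-\inner{\cS'\cdot}{\cdot}$. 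The product structure gives $\Ric_\Sigma(V,W)=0$ for $i\neq j$ and $\Ric_\Sigma(V,W)=\Ric_{N_i}(V,W)$ for $i=j$, since Ricci is scale invariant. Also $\cS$ and $\cS'$ are diagonal with respect to the splitting, which gives the off-diagonal vanishing.
\item For $i=j$, the remaining coefficient is $H(h_i'/h_i)+(h_i'/h_i)'=h_i''/h_i-(h_i')^2/h_i^2+\sum_\ell r_\ell h_i'h_\ell'/(h_ih_\ell)$. Separating the term $\ell=i$ gives $h_i''/h_i+(r_i-1)(h_i')^2/h_i^2+\sum_{\ell\neq i}r_\ell h_i'h_\ell'/(h_ih_\ell)$, as claimed.
\end{itemize}

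The only delicate point is keeping the curvature sign convention of the paper consistent, in particular in the Riccati equation. I will double-check this on the model case $k=1$, $N=\mathbb{S}^{r}$, $h=\sin s$, where the formulas must return $\Ric=r\,g$.
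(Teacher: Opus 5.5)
Your proof is correct. The sign check you plan also comes out right: for $h=\sin s$ the bracket is $-1+(r-1)\cot^2 s$, and $(r-1)g_N-\bigl(-1+(r-1)\cot^2 s\bigr)\sin^2 s\,g_N=r\sin^2 s\,g_N=r\,g$, while $\Ric(\partial_s,\partial_s)=-r\,h''/h=r$.

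The paper gives no proof of this lemma. It only cites \cite{dobarro} and \cite[Lemma 2.5]{bhs2026}, where such formulas come from computing the full curvature tensor directly from the connection, as in the usual warped product computation.

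You share the first half of that argument, namely the Koszul formula for the connection. You then replace the curvature computation by the hypersurface identities \eqref{eq_codazzi} and \eqref{eq_riccati} applied to the slices $s=\mathrm{const}$. This is legitimate and not circular, since the paper imports those identities from \cite{ew00,dw11,tranyamazaki} before stating the lemma.

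The inputs you need are all justified:
\begin{itemize}
\item the slices are Riemannian products with $\cS=(h_i'/h_i)\,\Id$ on $TN_i$, so $\nabla^{\Sigma}\cS=0$;
\item $H$ depends only on $s$;
\item $\cS'=(h_i'/h_i)'\,\Id$ on $TN_i$, whether you read $\cS'$ as the $s$-derivative of components in the product coordinates or as $\nabla_{\partial_s}\cS$ computed from your connection formulas.
\end{itemize}

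What your route buys is brevity and a good fit with the paper's framework. The same facts (product slices, parallel shape operator, block-diagonal evolution of $\cS$) are what the paper itself uses when it checks that the model triple satisfies \eqref{eq_cauchy_system} in the proof of Theorem \ref{thm_one_level}, and in the last paragraph of the proof of Corollary \ref{cor_open}.

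The direct computation in the cited references is self-contained instead. It does not depend on getting the signs of the Gauss, Codazzi and Riccati equations right under the paper's curvature convention. It also gives the whole curvature tensor, including the mixed sectional curvatures $-h_i'h_j'/(h_ih_j)$, which the paper instead recovers from the Gauss equation in the proof of Corollary \ref{cor_open}.
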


\section{Level hypersurfaces with parallel second fundamental form}\label{sec_mwp}

\begin{proof}[Proof of Proposition \ref{prop_level}]
We work on one connected component of $\Sigma$. Since $\nabla^{\Sigma}\cS=0$, $H$ is constant on $\Sigma$ and $\divg_{\Sigma}\cS=0$. By \eqref{eq_codazzi}, $\Ric(E_1,X)=0$ for every $X\in T\Sigma$, so $E_1$ is an eigenvector of $\Ric$ along $\Sigma$. Using \eqref{fundeq} and \eqref{eq_identities}, for $X\in T\Sigma$ we obtain
\[
	X\left(|\nabla f|^2\right)=2\nabla^2f(\nabla f,X)=-2\Ric(\nabla f,X)=0,\qquad X(R)=2\Ric(\nabla f,X)=0.
\]
Hence $|\nabla f|$ and $R$ are constant on $\Sigma$. Moreover, $\nabla^2f(X,Y)=|\nabla f|\inner{\cS X}{Y}$ for $X,Y\in T\Sigma$, so that $\Ric_T=\lambda\,\Id-|\nabla f|\,\cS$. The eigenvalues of $\cS$ are constant on $\Sigma$, because $\cS$ is parallel, and $\Ric(E_1,E_1)=R-\trg\Ric_T$ is constant as well.
\end{proof}

For the remainder of this section, every regular level hypersurface of $f$ has parallel second fundamental form. By Proposition \ref{prop_level}, around each regular point there is a function $s$ with $\mathrm{d}s=\mathrm{d}f/|\nabla f|$, so that $|\nabla s|=1$, $E_1=\nabla s$, $f=f(s)$ and $f'=|\nabla f|$, and the discussion of Section \ref{sec_prelim} applies. The following identity is \cite[(1.16)]{dw11} in our notation, see also \cite[(2.6)]{tranyamazaki}.

\begin{lemma}\label{lem_Sprime}
Let $(M^n,g,f,\lambda)$ be a gradient Ricci soliton, and let $s$ be a smooth function with $|\nabla s|=1$ such that $f$ depends only on $s$. In the coordinates of Section \ref{sec_prelim}, with $E_1=\nabla s$,
\begin{align}\label{eq_Sprime}
	\cS'=\Ric_{\Sigma}-\lambda\,\Id+(f'-H)\,\cS,\qquad \Ric(E_1,E_1)=\lambda-f''.
\end{align}
\end{lemma}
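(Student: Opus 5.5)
Both identities in \eqref{eq_Sprime} follow by evaluating the soliton equation \eqref{fundeq} in the adapted coordinates and comparing with the Riccati form \eqref{eq_riccati} of the Ricci tensor. The first step is to compute $\nabla^2f$ when $f=f(s)$ and $|\nabla s|=1$. We have $\nabla f=f'\nabla s=f'E_1$, so
\[
	\nabla^2f(X,Y)=\inner{\nabla_X(f'E_1)}{Y}=f''\,\mathrm{d}s(X)\,\mathrm{d}s(Y)+f'\inner{\cS X}{Y},
\]
where $\cS$ is extended by $\cS E_1=\nabla_{E_1}E_1=0$. Consequently $\nabla^2f(E_1,E_1)=f''$, $\nabla^2f(E_1,X)=0$ for $X$ tangent to the level hypersurfaces of $s$, and $\nabla^2f(Y,T)=f'\inner{\cS Y}{T}$ for $Y,T$ tangent.

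The second identity is then immediate. Evaluating \eqref{fundeq} on $(E_1,E_1)$ gives $\Ric(E_1,E_1)=\lambda-f''$.

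For the first identity, evaluate \eqref{fundeq} on tangent vectors $Y,T$. This gives $\Ric(Y,T)=\lambda\inner{Y}{T}-f'\inner{\cS Y}{T}$. Substituting the tangential formula of \eqref{eq_riccati} on the left-hand side yields
\[
	\Ric_{\Sigma}(Y,T)-H\inner{\cS Y}{T}-\inner{\cS'Y}{T}=\lambda\inner{Y}{T}-f'\inner{\cS Y}{T}.
\]
Solving for $\cS'$ and raising indices with $g_s$ gives $\cS'=\Ric_{\Sigma}-\lambda\,\Id+(f'-H)\cS$.

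The only point requiring care is the interpretation of $\cS'$. We take it to be the endomorphism $\nabla_{E_1}\cS$ restricted to tangent vectors, which coincides with the $s$-derivative of the components $\cS^i_j$ because $\partial_s$ commutes with the coordinate fields. This is exactly the convention under which the Riccati equation $\Rm(E_1,Y,E_1,T)=-\inner{(\cS'+\cS^2)Y}{T}$, already recalled in Section \ref{sec_prelim}, holds. Since \eqref{eq_riccati} is assumed, no further computation is needed; I expect the only mild obstacle to be checking these sign and convention consistencies.
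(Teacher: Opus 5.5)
Your proposal is correct and is essentially the paper's own proof. Like the paper, you write $\nabla^2f=f''\,\mathrm{d}s\otimes\mathrm{d}s+f'\nabla^2s$ with $\nabla^2s=\inner{\cS\cdot}{\cdot}$ on tangent vectors, read off $\Ric(E_1,E_1)=\lambda-f''$ and $\Ric_T=\lambda\,\Id-f'\cS$ from \eqref{fundeq}, and compare the latter with \eqref{eq_riccati}.
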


\begin{proof}
Since $\nabla^2f=f''\,\mathrm{d}s\otimes\mathrm{d}s+f'\,\nabla^2s$ and $\nabla^2s(Y,T)=\inner{\cS Y}{T}$ for tangent $Y,T$, the soliton equation gives $\Ric(E_1,E_1)=\lambda-f''$ and $\Ric_T=\lambda\,\Id-f'\cS$. Comparing with \eqref{eq_riccati}, we obtain \eqref{eq_Sprime}.
\end{proof}

\begin{proof}[Proof of Theorem \ref{thm_mwp}]
Fix a regular point and coordinates $(s,x^2,\ldots,x^n)$ around it as in Section \ref{sec_prelim}, defined on a product $I\times V$, with $g=\mathrm{d}s^2+g_s$. Let $\Gamma_{ij}^{\ell}(s,x)$ be the Christoffel symbols of $g_s$, and write $\cS_{ij}=g_s(\cS\partial_i,\partial_j)$. Since $g_{ij}'=2\cS_{ij}$, the variation formula for the Christoffel symbols gives
\[
	\left(\Gamma_{ij}^{\ell}\right)'=\sum_m g^{\ell m}\Big((\nabla^{\Sigma}_{\partial_i}\cS)_{jm}+(\nabla^{\Sigma}_{\partial_j}\cS)_{im}-(\nabla^{\Sigma}_{\partial_m}\cS)_{ij}\Big)=0.
\]
Hence all the metrics $g_s$ have the same Levi-Civita connection, which we still denote by $\nabla^{\Sigma}$, and they are all parallel with respect to it.

Shrinking $V$, we may assume that it is simply connected. Let $\mathrm{Hol}$ be the holonomy group of $(V,g_{s_0})$ at a point $x_0$, and decompose $T_{x_0}V=W_0\oplus W_1\oplus\cdots\oplus W_q$ orthogonally, where $W_0$ is the subspace fixed by $\mathrm{Hol}$ and $W_1,\ldots,W_q$ are irreducible and nontrivial. By the local de Rham theorem, after shrinking $V$ this gives a product decomposition of $(V,g_{s_0})$ into a flat factor and factors $V_1,\ldots,V_q$, and $\mathrm{Hol}=\mathrm{Hol}_1\times\cdots\times\mathrm{Hol}_q$, where $\mathrm{Hol}_j$ is the holonomy group of $V_j$, which acts only on $W_j$. A $\nabla^{\Sigma}$-parallel symmetric bilinear form corresponds to a self-adjoint endomorphism $B$ of $T_{x_0}V$ commuting with $\mathrm{Hol}$. For $j\geq1$ and any other summand $W$, the component $W_j\to W$ of $B$ is $\mathrm{Hol}_j$-equivariant, and $\mathrm{Hol}_j$ acts trivially on $W$. It therefore vanishes on the vectors $hw-w$, $h\in\mathrm{Hol}_j$, $w\in W_j$, which span $W_j$ because $\mathrm{Hol}_j$ fixes no nonzero vector of $W_j$. Thus $B$ preserves $W_j$, and, since $B$ is self-adjoint, its eigenspaces in $W_j$ are $\mathrm{Hol}_j$-invariant, so, by irreducibility, $B$ is a multiple of the identity on $W_j$. Consequently, on each $V_j$, $g_s$ is obtained from $g_{s_0}$ by multiplication by a positive function of $s$ alone, and $g_s$ has no mixed terms involving $V_j$.

On the flat factor, the intrinsic Ricci operator vanishes, and, by Proposition \ref{prop_level}, $f'-H$ is constant on each level hypersurface. Hence, by \eqref{eq_Sprime}, the restriction $\cS_0$ of the shape operator to the flat factor satisfies $\cS_0(s)=a(s)\cS_0(s_0)+b(s)\,\Id$, where
\[
	a(s)=\exp\left(\int_{s_0}^s(f'-H)\right),\qquad
	b(s)=-\lambda a(s)\int_{s_0}^s a(t)^{-1}\,\mathrm{d}t.
\]
The eigenspaces of $\cS_0(s_0)$ are parallel distributions. On each one, $g_s'=2\xi_i(s)g_s$, and hence $g_s=h_i(s)^2g_{s_0}$, where $h_i=\exp(\int_{s_0}^s\xi_i)$. Together with the nonflat factors, this gives \eqref{eq_mwp} on $I\times V$, with $g_{N_i}$ the restriction of $g_{s_0}$ to the corresponding factor.

On every factor, $\cS=\xi_i\,\Id$ with $\xi_i=h_i'/h_i$. Since the metrics $g_s$ have the same Levi-Civita connection, they have the same Ricci tensor, regarded as a $(0,2)$-tensor. Equation \eqref{eq_Sprime} then gives
\begin{align}\label{eq_fiber_einstein}
	\Ric_{N_i}=h_i^2\left(\xi_i'+\lambda-(f'-H)\xi_i\right)g_{N_i}.
\end{align}
The coefficient depends only on $s$, while both tensors on the fiber are independent of $s$. Thus it is a constant $c_i$, and every fiber of dimension at least two is Einstein. It also follows that the intrinsic Ricci tensor of every regular level hypersurface is parallel. Proportional warping functions may be grouped into a single Einstein fiber. To see that the resulting principal curvatures are distinct at a generic value of $s$, put $\kappa_i=c_i/h_i^2$, so that $\xi_i'=\kappa_i-\lambda+(f'-H)\xi_i$ and $\kappa_i'=-2\xi_i\kappa_i$. For two factors, $\delta=\xi_i-\xi_j$ and $\varepsilon=\kappa_i-\kappa_j$ satisfy the homogeneous linear system
\[
	\delta'=\varepsilon+(f'-H)\delta,\qquad \varepsilon'=-2\kappa_i\delta-2\xi_j\varepsilon.
\]
If $(\delta,\varepsilon)$ vanishes at one value of $s$, it vanishes identically, and $h_i$ and $h_j$ are proportional. Otherwise, at a zero of $\delta$ we have $\delta'=\varepsilon\neq0$, so the zeros of $\delta$ are simple and hence isolated.
\end{proof}

\section{A single level hypersurface}\label{sec_one}

We first describe the tangential derivatives of the Ricci tensor when $\nabla f$ is an eigenvector of $\Ric$.

\begin{lemma}\label{lem_ricci_form}
Let $(M^n,g,f,\lambda)$ be a gradient Ricci soliton such that $\Ric(\nabla f,X)=0$ for every $X$ orthogonal to $\nabla f$, and let $\Sigma$ be a regular level hypersurface of $f$. Then, for $X,Y,Z\in T\Sigma$,
\[
	(\nabla_X\Ric)(Y,Z)=-|\nabla f|\inner{(\nabla^{\Sigma}_X\cS)Y}{Z}.
\]
In particular, the Ricci tensor is tangentially parallel if and only if every regular level hypersurface of $f$ has parallel second fundamental form.
\end{lemma}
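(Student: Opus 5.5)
The plan is to combine the decomposition \eqref{eq_tangential} with the soliton equation restricted to $T\Sigma$. Fix a regular level hypersurface $\Sigma$ and $X,Y,Z\in T\Sigma$. Since $\nabla_XY=\nabla^{\Sigma}_XY-\inner{\cS X}{Y}E_1$, expanding $(\nabla_X\Ric)(Y,Z)=X(\Ric(Y,Z))-\Ric(\nabla_XY,Z)-\Ric(Y,\nabla_XZ)$ gives exactly \eqref{eq_tangential}. By hypothesis, $\Ric(E_1,\cdot)$ vanishes on $T\Sigma$, so the two correction terms drop out. We are left with
\[
	(\nabla_X\Ric)(Y,Z)=(\nabla^{\Sigma}_X\Ric_T)(Y,Z).
\]

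Next I identify $\Ric_T$. On $T\Sigma$ we have $\nabla^2f(Y,Z)=\inner{\nabla_Y\nabla f}{Z}=|\nabla f|\inner{\cS Y}{Z}$. The term $Y(|\nabla f|)\inner{E_1}{Z}$ vanishes since $Z\perp E_1$. By \eqref{fundeq} this gives $\Ric_T=\lambda\,\Id-|\nabla f|\,\cS$, as in the proof of Proposition~\ref{prop_level}. Moreover $X(|\nabla f|^2)=2\nabla^2f(\nabla f,X)=-2\Ric(\nabla f,X)=0$ by hypothesis, so $|\nabla f|$ is constant along $\Sigma$. Differentiating with $\nabla^{\Sigma}_X$, and using that $\Id$ is parallel and that $X(|\nabla f|)=0$, yields
\[
	(\nabla^{\Sigma}_X\Ric_T)(Y,Z)=-|\nabla f|\inner{(\nabla^{\Sigma}_X\cS)Y}{Z},
\]
which is the claimed formula.

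For the equivalence, note that $|\nabla f|\neq0$ on $\cR$. So $(\nabla_X\Ric)(Y,Z)=0$ for all tangential $X,Y,Z$ at every regular point holds iff $\nabla^{\Sigma}\cS=0$ on every regular level hypersurface. This is exactly Definition~\ref{def_tangential}.

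No step is really an obstacle. The only points needing care are that the hypothesis $\Ric(\nabla f,\cdot)|_{T\Sigma}=0$ is used twice: once to kill the correction terms in \eqref{eq_tangential}, and once to make $|\nabla f|$ constant along $\Sigma$ so that it passes through $\nabla^{\Sigma}$. Sign conventions for $\cS$ also need care; here $\cS X=\nabla_XE_1$ with $E_1=\nabla f/|\nabla f|$, matching Proposition~\ref{prop_level}.
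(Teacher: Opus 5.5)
Your proposal is correct and follows essentially the same argument as the paper. Both use \eqref{eq_tangential} with $\Ric(E_1,\cdot)|_{T\Sigma}=0$ to reduce to $\nabla^{\Sigma}_X\Ric_T$, then use $\Ric_T=\lambda\,\Id-|\nabla f|\,\cS$ together with $X(|\nabla f|^2)=-2\Ric(\nabla f,X)=0$; the equivalence then follows from $|\nabla f|\neq0$ on $\cR$.
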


\begin{proof}
Since $\Ric(E_1,\cdot)$ vanishes on $T\Sigma$, \eqref{eq_tangential} gives
\[
	(\nabla_X\Ric)(Y,Z)=(\nabla^{\Sigma}_X\Ric_T)(Y,Z).
\]
As in the proof of Proposition \ref{prop_level}, we have $X(|\nabla f|^2)=-2\Ric(\nabla f,X)=0$ and $\Ric_T=\lambda\,\Id-|\nabla f|\,\cS$.
\end{proof}

Let now $s$ be a smooth function with $|\nabla s|=1$, and use the coordinates of Section \ref{sec_prelim}, with $E_1=\nabla s$. We denote by $\nabla^{\Sigma}\nabla^{\Sigma}f$ the Hessian, with respect to $g_s$, of the restriction of $f$ to the level hypersurfaces of $s$, regarded as a self-adjoint endomorphism. For $Y,T$ tangent to these hypersurfaces, $\nabla_YT=\nabla^{\Sigma}_YT-\inner{\cS Y}{T}E_1$, so that $\nabla^2f(Y,T)=(\nabla^{\Sigma}\nabla^{\Sigma}f)(Y,T)+f'\inner{\cS Y}{T}$, and $\nabla^2f(E_1,E_1)=f''$, since $\nabla_{E_1}E_1=0$. Comparing \eqref{fundeq} with \eqref{eq_riccati}, as in the proof of Lemma \ref{lem_Sprime}, we obtain
\begin{align}\label{eq_cauchy_system}
\begin{split}
	&g_s'=2g_s(\cS\cdot,\cdot),\qquad f''=\lambda+H'+|\cS|^2,\\
	&\cS'=\Ric_{g_s}-\lambda\,\Id+(f'-H)\,\cS+\nabla^{\Sigma}\nabla^{\Sigma}f.
\end{split}
\end{align}

\begin{proof}[Proof of Theorem \ref{thm_one_level}]
By \cite[Corollary 1.3 and the remark following it]{kotblms}, $(M,g)$ is a real analytic manifold and $\nabla f$ is analytic, hence so is $f$ (see also \cite{bando,ivey96}). Let $c=f(\Sigma)$, and let $V$ be a small connected open subset of $\Sigma$, with induced metric $g_0$ and shape operator $\cS_0$. By Proposition \ref{prop_level}, $|\nabla f|$ is equal to a constant $\beta>0$ on $V$. We first show that the level hypersurfaces of $f$ have parallel second fundamental form near $V$.

Since $\cS_0$ is parallel, its eigendistributions are parallel and give a local product decomposition of $(V,g_0)$. The Ricci tensor of a product is block diagonal, so $\Ric_{g_0}$ commutes with $\cS_0$. As $\Ric_{g_0}$ is also parallel, the common eigendistributions of $\cS_0$ and $\Ric_{g_0}$ are parallel, and, after shrinking $V$,
\begin{align}\label{eq_cauchy_product}
	(V,g_0)=(V_1,g_1)\times\cdots\times(V_q,g_q),\qquad \Ric_{g_i}=\kappa_ig_i,\qquad \cS_0=\alpha_i\,\Id\ \text{ on }TV_i,
\end{align}
where $\alpha_i$ and $\kappa_i$ are constants. Let $r_i=\dim V_i$. These factors are analytic, since $g_0$ and the parallel distributions are analytic. Following the geodesics orthogonal to $V$ with initial velocity $\nabla f/|\nabla f|$, we obtain analytic coordinates $(s,x)$ near $V$, with $V=\{s=0\}$, in which $g=\mathrm{d}s^2+g_s$ as in Section \ref{sec_prelim}. Then $(g_s,\cS,f)$ satisfies \eqref{eq_cauchy_system}.

Let $h_i>0$, $\xi_i$ and $\phi$ be the analytic solutions, near $s=0$, of
\[
	h_i'=\xi_ih_i,\qquad
	\xi_i'=\frac{\kappa_i}{h_i^2}-\lambda+\left(\phi'-\sum_jr_j\xi_j\right)\xi_i,\qquad
	\phi''=\lambda+\sum_jr_j\left(\xi_j'+\xi_j^2\right),
\]
with $h_i(0)=1$, $\xi_i(0)=\alpha_i$, $\phi(0)=c$ and $\phi'(0)=\beta$. After the equations for $\xi_j'$ are substituted in the last one, this is a system of ordinary differential equations in normal form. Put $\widehat{g}_s=\sum_ih_i(s)^2g_i$, $\widehat{\cS}=\xi_i\,\Id$ on $TV_i$ and $\widehat{f}=\phi(s)$. Since $\Ric_{\widehat{g}_s}=(\kappa_i/h_i^2)\,\Id$ on $TV_i$ and $\widehat{f}$ is constant on the level hypersurfaces of $s$, the triple $(\widehat{g}_s,\widehat{\cS},\widehat{f})$ also satisfies \eqref{eq_cauchy_system}. By \eqref{eq_cauchy_product}, and since $f=c$ and $f'=\inner{\nabla f}{E_1}=\beta$ on $V$, the functions $(g_s,\cS,f,f')$ and $(\widehat{g}_s,\widehat{\cS},\widehat{f},\widehat{f}')$ coincide at $s=0$. Differentiating \eqref{eq_cauchy_system} $j$ times with respect to $s$, we see that the derivatives of order $j+1$ of $(g_s,\cS,f,f')$ with respect to $s$ are determined by the derivatives of order at most $j$ and their derivatives in $x$, since $\Ric_{g_s}$ and $\nabla^{\Sigma}\nabla^{\Sigma}f$ involve only derivatives in $x$, and $H'=\trg\cS'$. If the derivatives of order at most $j$ of both quadruples coincide at $s=0$ as functions of $x\in V$, so do their derivatives in $x$. By induction, all the derivatives with respect to $s$ of both quadruples coincide at $s=0$, as functions of $x$. Hence $(g_s,f)$ and $(\widehat{g}_s,\widehat{f})$ have the same partial derivatives at the points of $\{0\}\times V$, and, by analyticity, $g_s=\widehat{g}_s$ and $f=\phi(s)$ near $\{0\}\times V$. Since $\phi'(0)=\beta>0$, the level hypersurfaces of $f$ near $V$ are the level hypersurfaces of $s$, which are Riemannian products whose shape operator is a multiple of the identity on each factor, so they have parallel second fundamental form. Moreover, by Lemma \ref{lem_mwp_ricci}, $\Ric(\nabla f,X)=0$ near $V$ for every $X$ orthogonal to $\nabla f$.

Now consider the analytic tensor
\[
	A(X,Y)=\Ric(\nabla f,X)\,\mathrm{d}f(Y)-\Ric(\nabla f,Y)\,\mathrm{d}f(X).
\]
It vanishes on a nonempty open set, hence on $M$, since $M$ is connected. If $X$ is orthogonal to $\nabla f$, then $A(X,\nabla f)=|\nabla f|^2\Ric(\nabla f,X)$, so $\Ric(\nabla f,X)=0$ at every regular point, and trivially at critical points. Hence Lemma \ref{lem_ricci_form} applies. Let $L=|\nabla f|^2\,\Id-\nabla f\otimes\mathrm{d}f$ and $\cT(X,Y,Z)=(\nabla_{LX}\Ric)(LY,LZ)$. This tensor is analytic on $M$. On $\cR$, $L=|\nabla f|^2P$, where $P$ is the orthogonal projection onto the tangent spaces of the level hypersurfaces, so, by Lemma \ref{lem_ricci_form}, $\cT$ vanishes at a regular point if and only if $\nabla^{\Sigma}\cS=0$ at this point. By the first part, $\cT$ vanishes on a nonempty open set, hence on $M$. Thus every regular level hypersurface of $f$ has parallel second fundamental form.
\end{proof}

\section{Constant scalar curvature}\label{sec_rigid}

Throughout this section, $(M^n,g,f,\lambda)$ is a complete gradient Ricci soliton with $\lambda\neq0$ and constant scalar curvature $R$. By \eqref{eq_identities}, $\Ric(\nabla f,\cdot)=0$ and, after adding a constant to $f$, we have $|\nabla f|^2=2\lambda f$ (see also \cite{fr16}). Hence $f/\lambda\geq0$, the critical set of $f$ is $F=f^{-1}(0)$, and $\cR=M\setminus F$. We assume that $f$ is nonconstant, since otherwise $M$ is Einstein. The function $r=\sqrt{2f/\lambda}$ satisfies $|\nabla r|=1$ on $\cR$, and $f=\frac{\lambda}{2}r^2$. By \cite[Lemma 6]{fr16}, $F\neq\emptyset$, and, since $f$ is isoparametric, $F$ is a smooth submanifold of $M$ \cite{wang}. The integral curves of $-\nabla r$ are unit speed geodesics along which $r$ decreases with unit speed. By completeness, the one starting at $x\in\cR$ extends to a geodesic $\gamma$ on $[0,r(x)]$ with $r(\gamma(t))=r(x)-t$, so that $\gamma(r(x))\in F$. Hence every point $x\in\cR$ is joined to a point of $F$ by a curve of length $r(x)$ whose interior lies in $\cR$. Since $r$ is $1$-Lipschitz and vanishes on $F$, $r$ is the distance to $F$. We take the shape operator with respect to $\nabla r$ when using $r$ in place of $s$, which is allowed by Lemma \ref{lem_Sprime}.

The next lemma is the standard description of the Hessian of $f$ at its critical set (cf. \cite{wang} and the proof of \cite[Theorem 1]{fr16}).

\begin{lemma}\label{lem_tube}
Let $\gamma$ be a unit speed geodesic with $\gamma(0)=p\in F$ and $r(\gamma(t))=t$ for $0<t<\varepsilon$. In a parallel orthonormal frame of $\gamma'^{\perp}$ along $\gamma$, the matrix $t\,\cS(t)$ of $t$ times the shape operator of the level hypersurface of $r$ through $\gamma(t)$ extends smoothly to $t=0$, and its value at $t=0$ has only the eigenvalues $0$ and $1$.
\end{lemma}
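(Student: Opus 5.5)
The plan is to study the Hessian of $f$ along $\gamma$ and read off $\cS$ from it. Since $|\nabla f|^2=2\lambda f$ and $f=\frac{\lambda}{2}r^2$, we have $\nabla f=\lambda r\nabla r$ on $\cR$. Hence
\[
	\nabla^2f=\lambda\,\mathrm{d}r\otimes\mathrm{d}r+\lambda r\nabla^2r .
\]
Restricted to $\gamma'^{\perp}$, and using that $\nabla^2r$ coincides there with $\inner{\cS\cdot}{\cdot}$ (shape operator with respect to $\nabla r$), this gives, for $X,Y\perp\gamma'$ at $\gamma(t)$,
\[
	\nabla^2f(X,Y)=\lambda t\inner{\cS(t)X}{Y}.
\]
The left-hand side is the restriction of the smooth tensor $\nabla^2f=\lambda g-\Ric$ along the smooth curve $\gamma$. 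The normal vector $\gamma'(t)$ is smooth on $[0,\varepsilon)$, because $\gamma$ is a geodesic through $p$. Thus in a parallel orthonormal frame $\{e_a\}$ of $\gamma'^{\perp}$, the matrix $\nabla^2f(e_a,e_b)(\gamma(t))$ is smooth on $[0,\varepsilon)$. Dividing by $\lambda\neq0$ gives a smooth extension of $t\,\cS(t)$, namely $\lambda^{-1}\nabla^2f|_{\gamma'^{\perp}}$. At $t=0$ this is $\lambda^{-1}\nabla^2f_p$ restricted to $\gamma'(0)^{\perp}$.

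It remains to determine the eigenvalues of $A=\lambda^{-1}\nabla^2f_p$ at a critical point $p\in F$. We use $\Ric(\nabla f,\cdot)=0$, i.e.\ $\Ric(\nabla f)=0$ as an endomorphism. Then
\[
	\nabla^2f(\nabla f)=\lambda\nabla f-\Ric(\nabla f)=\lambda\nabla f
\]
on all of $M$. Differentiating at $p$, where $\nabla f=0$, gives $\nabla^2f_p\circ\nabla^2f_p=\lambda\,\nabla^2f_p$, i.e.\ $A^2=A$. Hence $A$ is an orthogonal projection with eigenvalues only $0$ and $1$. Alternatively one can differentiate $|\nabla f|^2=2\lambda f$ twice at $p$, which gives $2(\nabla^2f_p)^2=2\lambda\nabla^2f_p$, the same identity without invoking the Ricci condition.

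Next we check that $\gamma'(0)$ is an eigenvector of $A$, so that the restriction to $\gamma'(0)^{\perp}$ is again a projection. Along $\gamma$, $\nabla^2f(\gamma',\gamma')=f''(t)=\lambda$ for $t>0$, since $f=\frac{\lambda}{2}t^2$. By continuity $\inner{A\gamma'(0)}{\gamma'(0)}=1$. For a projection $A$ and a unit vector $v$, $\inner{Av}{v}=|Av|^2=1$ forces $Av=v$. So $\gamma'(0)$ lies in the $1$-eigenspace, $A$ preserves $\gamma'(0)^{\perp}$, and its restriction there is still a projection with eigenvalues in $\{0,1\}$.

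The main subtlety is the smoothness claim at $t=0$. The shape operator itself is only defined for $t>0$. The point is that we never extend $\cS$ directly: we extend $t\cS(t)$ through the identity with $\lambda^{-1}\nabla^2f$, which is globally smooth, and for this we need the parallel frame of $\gamma'^{\perp}$ to be smooth up to $t=0$. That holds because $\gamma$ is a geodesic defined across $p$. The argument also uses $r(\gamma(t))=t$ exactly, so that $\nabla r=\gamma'$ along $\gamma$ for $t>0$.
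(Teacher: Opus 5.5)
Your proof is correct and follows essentially the same route as the paper: identify $t\,\cS(t)$ with $\lambda^{-1}\nabla^2f$ on $\gamma'^{\perp}$, obtain $J^2=\lambda J$ at $p$ by differentiating $|\nabla f|^2=2\lambda f$ twice, and use $\nabla^2f(\gamma',\gamma')=\lambda$ to see that $\gamma'(0)$ is an eigenvector. Your projection argument ($\inner{Av}{v}=|Av|^2=1$ forces $Av=v$) makes explicit a step the paper leaves implicit, and your alternative derivation via $\Ric(\nabla f,\cdot)=0$ is also valid under the standing constant-scalar-curvature assumption.
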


\begin{proof}
For $X,Y$ orthogonal to $\nabla r$, we have $\nabla^2f(X,Y)=\lambda r\inner{\cS X}{Y}$. Thus $t\,\cS(t)$ is the matrix of $\lambda^{-1}\nabla^2f$ restricted to $\gamma'^{\perp}$, which is smooth up to $t=0$. Differentiating $|\nabla f|^2=2\lambda f$ twice at $p$ gives $J^2=\lambda J$ for $J=(\nabla^2f)_p$. Since $\nabla^2f(\gamma',\gamma')=f''=\lambda$, the vector $\gamma'(0)$ is an eigenvector of $J$ with eigenvalue $\lambda$, so $J$ preserves $\gamma'(0)^{\perp}$, where its eigenvalues lie in $\{0,\lambda\}$.
\end{proof}

Since $f=\lambda r^2/2$, equations \eqref{eq_identities} and \eqref{eq_riccati} give
\begin{align}\label{eq_m}
	rH=r^2|\cS|^2=m,\qquad m=n-1-\frac{R}{\lambda}.
\end{align}
If $u_i=r\xi_i$ are the eigenvalues of $r\cS$, with multiplicities $r_i$, then
\begin{align}\label{eq_traces}
	\sum_i r_i u_i=m=\sum_i r_i u_i^2.
\end{align}
These are also $\trg\Ric=R$ and $|\Ric|^2=\lambda R$, since the normal Ricci eigenvalue is zero and the tangential ones are $\lambda(1-u_i)$. The norm identity follows from $\Delta_fR=0$.

\begin{proof}[Proof of Corollary \ref{cor_open}]
If $m=0$, equation \eqref{eq_m} gives $\cS=0$, and radial flatness follows from the Riccati equation. Suppose $m>0$ and fix a regular point $x$. Following the integral curve of $-\nabla r$ through $x$, we reach a point $p\in F$, and reversing it we obtain a geodesic $\gamma$ with $\gamma(0)=p$ and $\gamma(r(x))=x$. Fix $0<\delta<r(x)<T$. By completeness, the integral curves of $\nabla r$ starting at points of $r^{-1}(\delta)$ near $\gamma(\delta)$ are geodesics defined, with $r$ as parameter, for $0<r<T$. Following them from a small neighborhood $V$ of $\gamma(\delta)$ in $r^{-1}(\delta)$, we obtain coordinates as in Section \ref{sec_prelim} on an open set $(0,T)\times V$ containing $\gamma((0,T))$, with $g=\mathrm{d}r^2+g_r$.

The proof of Theorem \ref{thm_mwp} applies on all of $(0,T)\times V$, since it only requires shrinking $V$: the metrics $g_r$, $0<r<T$, have the same Levi-Civita connection, and the decomposition is determined by its holonomy and by the eigenspaces of $\cS_0$ at a single value of $r$. Hence, after shrinking $V$, we have on $(0,T)\times V$ a fixed product decomposition
\[
	g=\mathrm{d}r^2+\sum_i h_i(r)^2g_{N_i},\qquad \Ric_{N_i}=c_i g_{N_i},\qquad \xi_i=\frac{h_i'}{h_i},
\]
with pairwise nonproportional warping functions. By Lemma \ref{lem_mwp_ricci}, $\nabla_{\partial_r}V=\xi_iV$ for $V$ tangent to $N_i$, so the distributions $TN_i$ are invariant under parallel transport along $\gamma$, and we may choose a parallel orthonormal frame along $\gamma$ adapted to them, in which $r\cS$ is diagonal with entries $u_i=r\xi_i$. By Lemma \ref{lem_tube}, each $u_i$ extends smoothly to $r=0$, with $u_i(0)=\varepsilon_i\in\{0,1\}$. Hence $u_i=\varepsilon_i+O(r)$, so that $\xi_i=\varepsilon_i r^{-1}+O(1)$, and integrating $(\log h_i)'=\xi_i$ gives $h_i(r)=a_i r^{\varepsilon_i}(1+O(r))$ with $a_i>0$. We call a factor collapsing if $\varepsilon_i=1$. Equation \eqref{eq_traces} gives $m=\sum_{\varepsilon_i=1}r_i$.

There is only one collapsing factor. Indeed, suppose that two distinct factors $N_i$ and $N_j$ collapse, and let $e_i\in TN_i$ and $e_j\in TN_j$ be unit vectors. Since the level hypersurfaces are Riemannian products, the intrinsic sectional curvature of $\mathrm{span}\{e_i,e_j\}$ is zero, and, since $\cS e_i=\xi_ie_i$ and $\cS e_j=\xi_je_j$, the Gauss equation gives $\Rm(e_i,e_j,e_i,e_j)=-\xi_i\xi_j=-r^{-2}+O(r^{-1})$ along $\gamma$. This is impossible, since $g$ is smooth at $p$ and hence its curvature is bounded near $p$. Denote the collapsing factor by $N_0$ and its warping function by $h$. Since $m=\sum_{\varepsilon_i=1}r_i$, the dimension of $N_0$ is $m$. Rescaling $g_{N_0}$, we arrange $h(r)/r\to1$. Put $u=rh'/h=r\xi_0$, so that $u\to1$. For a unit vector $e$ tangent to $N_0$, we have $\cS e=\xi_0e$ and $\Ric_{\Sigma}(e,e)=\Ric_{N_0}(e,e)=c_0g_{N_0}(e,e)=c_0/h^2$. Hence \eqref{eq_gauss} and $rH=m$ give
\[
	r^2\frac{c_0}{h^2}=r^2\big(\Ric(e,e)-\Rm(\partial_r,e,\partial_r,e)\big)+mu-u^2.
\]
Since the curvature of $g$ is bounded near $p$, the first term on the right-hand side tends to zero as $r\to0^+$, and, since $u\to1$ and $h/r\to1$, we obtain $c_0=m-1$.

We now show that $u=1$, that is, $h=r$. By \eqref{eq_traces}, $m u^2\leq m$, hence $u\leq1$. Since $(\log(h/r))'=(u-1)/r\leq0$ and $h/r\to1$, we have $h\leq r$. Applying \eqref{eq_Sprime} on $TN_0$, with $f'=\lambda r$ and $H=m/r$, gives
\[
	r u'=(m-1)\left(\frac{r^2}{h^2}-u\right)-\lambda r^2(1-u)
	\geq (m-1-\lambda r^2)(1-u).
\]
Thus, by the equality above, the nonnegative function $\Phi(r)=r^{m-1}e^{-\lambda r^2/2}(1-u(r))$ satisfies
\[
	\Phi'(r)=-(m-1)r^{m-2}e^{-\lambda r^2/2}\left(\frac{r^2}{h^2}-1\right)\leq0
\]
and $\lim_{r\to0^+}\Phi(r)=0$, since $u(r)\to1$. Therefore $\Phi=0$ and $u=1$ on $(0,T)$. The norm identity in \eqref{eq_traces} now forces $u_i=0$ on every other factor. Hence $\xi_0=1/r$ and $\xi_i=0$ for $i\neq0$, so $\cS'+\cS^2=0$ on $(0,T)\times V$.

Since $x$ was arbitrary, the Riccati equation gives $\Rm(\nabla f,X,\nabla f,X)=0$ at every regular point, and at a critical point this identity is automatic. Thus $M$ is radially flat and is rigid by \cite[Theorem 1.2]{pewy}. If $\lambda>0$, write $M=N\times_{\Gamma}\R^{n-k}$ with $\Ric_N=\lambda g_N$. By Myers' theorem, $N$ is compact. The properly discontinuous action of $\Gamma$ preserves $N\times\{0\}$, because it acts orthogonally on the Euclidean factor, so $\Gamma$ is finite. Finally, $R=k\lambda$.

For the last assertion, the level hypersurfaces of $f$ in a neighborhood as in the statement are slices of \eqref{eq_mwp}, which are Riemannian products of the manifolds $(N_i,h_i(s)^2g_{N_i})$. By Lemma \ref{lem_mwp_ricci}, their shape operator is $(h_i'/h_i)\,\Id$ on $TN_i$, and, since the projections onto the factors of a Riemannian product are parallel, they have parallel second fundamental form.
\end{proof}

\begin{proof}[Proof of Theorem \ref{thm_cao}]
Since $\Ric(\nabla f,\cdot)=0$, Lemma \ref{lem_ricci_form} shows that every regular level hypersurface of $f$ has parallel second fundamental form, and Corollary \ref{cor_open} applies.
\end{proof}

\begin{proof}[Proof of Theorem \ref{thm_rigid}]
By Theorem \ref{thm_one_level}, every regular level hypersurface of $f$ has parallel second fundamental form, and Corollary \ref{cor_open} applies.
\end{proof}

\begin{remark}
By Lemma \ref{lem_tube}, $m$ is the codimension of $F$ minus one, so that $R=(\dim F)\lambda$. This is the quantization of the scalar curvature obtained by Fern\'andez-L\'opez and Garc\'ia-R\'io in the proof of \cite[Theorem 1]{fr16}.
\end{remark}

\section{Level hypersurfaces with at most one nonzero principal curvature}\label{sec_one_curvature}

In this section, $(M^n,g,f,\lambda)$ is a complete gradient shrinking Ricci soliton as in Theorem \ref{thm_one_curvature}.

\begin{proof}[Proof of Theorem \ref{thm_one_curvature}]
As in the proof of Theorem \ref{thm_one_level}, $(M,g)$ and $f$ are analytic. We assume that $f$ is nonconstant, since otherwise $M$ is Einstein. Fix $x\in\cR$. By Theorem \ref{thm_mwp}, $x$ has a connected and simply connected neighborhood $W\subset\cR$ isometric to a multiply warped product \eqref{eq_mwp}, with pairwise nonproportional warping functions and Einstein fibers, on which $f$ depends only on $s$ and $\cS=\xi_i\,\Id$ on $TN_i$, with $\xi_i=h_i'/h_i$. Since $f'=|\nabla f|$ and $H$ are analytic functions of $s$, the pairs $(h_i,\xi_i)$ are solutions of the analytic system formed by $h_i'=\xi_ih_i$ and \eqref{eq_fiber_einstein}, and hence are analytic. If two of the functions $\xi_i$ were not identically zero, they would both be nonzero on an open interval, which is excluded by our assumption. Moreover, a warping function with $\xi_i=0$ is constant, so, since the warping functions are pairwise nonproportional, $k\leq2$.

Suppose first that $\cS=0$ on $W$. By \eqref{eq_riccati} and \eqref{eq_Sprime}, $\Ric(E_1,E_1)=0$ and $\Ric_T=\lambda\,\Id$, so that $R=(n-1)\lambda$, and the Riccati equation gives $\Rm(E_1,X,E_1,X)=0$ for every $X$. Thus $\mathrm{d}R$ and the analytic tensor $\Rm(\nabla f,\cdot,\nabla f,\cdot)$ vanish on $W$, hence on $M$, and $M$ is rigid by \cite[Theorem 1.2]{pewy}.

Otherwise, after shrinking $W$, we may assume that $\xi_0$ has no zeros, where $N_0$ is the fiber whose warping function $h=h_0$ is not constant. Then $TN_0$ is the eigenspace of the nonzero principal curvature, and we put $m=\dim N_0$. If $k=2$, the other warping function is constant, and we may assume that it is equal to one. By \eqref{eq_fiber_einstein}, the other fiber $N_1$ then satisfies $\Ric_{N_1}=\lambda g_{N_1}$, and $W$ is the Riemannian product of $I\times_hN_0$ and $N_1$. If $k=1$, we let $N_1$ be a point.

Let $\widetilde{M}$ be the universal covering of $M$, with the induced metric $\widetilde{g}$ and potential function $\widetilde{f}$, which is a complete gradient shrinking Ricci soliton, and let $\widetilde{W}$ be a connected component of the preimage of $W$. The distribution tangent to $N_1$ is parallel on $\widetilde{W}$. Since $\widetilde{g}$ is analytic and $\widetilde{M}$ is simply connected, the holonomy group of $\widetilde{g}$ at a point of $\widetilde{W}$ coincides with its local holonomy group \cite[Chapter II, Section 10]{kn}, so this distribution extends to a parallel distribution on $\widetilde{M}$. By the de Rham decomposition theorem \cite[Chapter IV, Section 6]{kn}, $\widetilde{M}$ is isometric to a Riemannian product $P\times Q$, where $Q$ is the integral manifold of this distribution, so that $\widetilde{W}$ is an open subset of $P\times Q$ whose slices $\{y\}\times Q$ contain the factors $N_1$. The Ricci tensor of $P\times Q$ is block diagonal, and hence so is $\nabla^2\widetilde{f}$, by \eqref{fundeq}. Thus $\widetilde{f}(y,z)=f_P(y)+f_Q(z)$, and $(P,g_P,f_P,\lambda)$ and $(Q,g_Q,f_Q,\lambda)$ are complete gradient shrinking Ricci solitons. On $\widetilde{W}$, $\widetilde{f}$ depends only on $s$, so $f_Q$ is constant on an open set, hence on $Q$, by analyticity, and $Q$ is Einstein with $\Ric_Q=\lambda g_Q$.

The soliton $P$ has dimension $m+1$, and we show that it is Einstein or rigid. Suppose first that $m\geq3$. On the open subset of $P$ corresponding to $I\times_hN_0$, the level hypersurfaces of $f_P$ are totally umbilical, and the scalar curvature $R_P=R-\lambda\dim Q$ is constant on them, by Proposition \ref{prop_level}. Hence the tensor $D$ of Cao and Chen of $P$ vanishes on this open set by \cite[Proposition 3.1]{caoqian}. Since $D$ is analytic, it vanishes on $P$, and, by \cite[Corollary 5.1]{caoqian}, $P$ is Einstein, or a finite quotient of the Gaussian soliton $\R^{m+1}$ or of $N^m\times\R$, where $N$ is Einstein. If $m=2$ and $P$ is compact, then $P$ has constant curvature by a result of Ivey \cite{ivey93}. If $m=2$ and $P$ is noncompact, then $P$ is flat or a quotient of $\mathbb{S}^2\times\R$, by the classification of Cao, Chen and Zhu \cite{ccz} (see also \cite{caozhou}). If $m=1$, then $\Ric_P=\frac{1}{2}R_Pg_P$, and \eqref{eq_identities} gives $\nabla R_P=R_P\nabla f_P$, so that $R_P=a\,e^{f_P}$ for a constant $a$. If $P$ is compact, it has constant curvature by a result of Hamilton \cite{ham88}. Otherwise, $R_P\geq0$ and $f_P$ is unbounded above, by \cite[Lemma 2.2 and Theorem 1.1]{caozhou}. Since $R_P\leq 2\lambda f_P+C$ by \eqref{eq_identities}, this forces $a=0$, so that $P$ is flat.

In the remaining cases, $P$ is rigid. If $P$ is Einstein, then $\nabla\Ric_P=0$, and \eqref{eq_soliton_commutation} gives $\Rm_P(X,Y,\nabla f_P,Z)=0$. In all cases, $P$ has constant scalar curvature and is radially flat. Since $\nabla\widetilde{f}=\nabla f_P$ is tangent to $P$, the same holds for $\widetilde{M}$, and hence for $M$. By \cite[Theorem 1.2]{pewy}, $M$ is rigid.
\end{proof}

\section*{Acknowledgments}

This study was financed by the Coordena\c{c}\~ao de Aperfei\c{c}oamento de Pessoal de N\'ivel Superior -- Brasil (CAPES) -- Finance Code 001. The author thanks the Departamento de Matem\'atica of the Universidade de Bras\'ilia, where this work was carried out.

\end{document}